\let\ORIlabel\label
\let\ORIrefstepcounter\refstepcounter
\AddToHook{package/hyperref/before}{
    \let\label\ORIlabel
    \let\refstepcounter\ORIrefstepcounter
}
\documentclass[final,onefignum,onetabnum]{siamart220329}
\usepackage{amsfonts,amsmath,mathrsfs,bm} %,mathtools}
\usepackage{amssymb}
\usepackage[notref,notcite]{showkeys}
\usepackage{graphicx}
\usepackage{wrapfig}
\usepackage{subfig}
\usepackage{float}
\usepackage{bbm}
\usepackage{algorithmic}
\usepackage{gensymb,color,soul}
\usepackage{tikz}
\usepackage{tikz-3dplot}
\usepackage{booktabs}

\newcommand{\R}{\mathbb{R}}

\newcommand{\Acal}{\mathcal{A}}

\newtheorem{remark}[theorem]{Remark}
\newtheorem{own_algorithm}{Algorithm}

\newcommand{\vertiii}[1]{{\left\vert\kern-0.25ex\left\vert\kern-0.25ex\left\vert #1 
\right\vert\kern-0.25ex\right\vert\kern-0.25ex\right\vert}}

\newtheorem{example}{Example}

\usepackage{array} % PH 27.2.2026

\makeatletter
\newcommand*\bigcdot{\mathpalette\bigcdot@{.5}}
\newcommand*\bigcdot@[2]{\mathbin{\vcenter{\hbox{\scalebox{#2}{$\m@th#1\bullet$}}}}}
\makeatother

\title{A bilinear inverse problem with forward operator inaccuracy applied to neonatal atlas-based diffuse optical tomography}

\author{
A.~Hakula\footnotemark[1]
\and P.~Hirvi\footnotemark[1]
\and N.~Hyv\"onen\footnotemark[1] \footnotemark[2]
}

\begin{document}
\maketitle

\renewcommand{\thefootnote}{\fnsymbol{footnote}}
\footnotetext[1]{Aalto University, Department of Mathematics and Systems Analysis, P.O.~Box 11100, FI-00076 Aalto, Finland (aada.hakula@aalto.fi, pauliina.hirvi@aalto.fi, nuutti.hyvonen@aalto.fi). The work of PH and NH was supported by the Research Council of Finland (Flagship of Advanced Mathematics for Sensing Imaging and Modelling grant 359181). The work of AH was part of the Ministry of Education and Culture’s Doctoral Education Pilot under Decision No. VN/3137/2024-OKM-6 (Doctoral Education Pilot for Mathematics of Sensing, Imaging and Modelling). PH was also supported by the Alfred Kordelin Foundation.}
\footnotetext[2]{Corresponding author.}

\begin{abstract} 
Linear inverse problems are highly common in practical real-world applications from industry to medical imaging. The forward operator is often built on some approximations of the studied system. Handling inaccuracies in the forward operator in the context of inverse problems is a relatively unstudied problem. In this work, we assume that we have a set of candidate forward operator matrices and suggest principal component analysis (PCA) for modeling their variation from the mean. We combine the original linear problem with the included forward operator inaccuracy into a bilinear tensor inverse problem and present two optimization algorithms and Gibbs sampling for approximately solving the problem. As a real-world test case, we apply the algorithms to account for the inaccuracy that is present in the sensitivity profiles or Jacobian matrices in diffuse optical tomography when an atlas-based model of the head anatomy is used instead of the subject's own anatomical model in neonates over a wide range of gestational ages (29--44 weeks). We report visual and numerical improvements in the spatial localization and contrast-to-noise-ratio in reconstructions of simulated hemodynamic activity.  
\end{abstract}

\renewcommand{\thefootnote}{\arabic{footnote}}

\begin{keywords}
atlas model, Bayesian inversion, bilinear inverse problem, brain, diffuse optical tomography, Gauss--Newton method, block coordinate descent, Gibbs sampler, neonate, registration
\end{keywords}

\begin{AMS}
    65F10, 78A46, 62H25, 92C55
\end{AMS}

\pagestyle{myheadings}
\thispagestyle{plain}
\markboth{A.~HAKULA, P.~HIRVI, AND N.~HYV\"ONEN}{BILINEAR INVERSE PROBLEM WITH OPERATOR INACCURACY IN DOT}

\section{Introduction}
\label{sec:introduction}
Finite- yet high-dimensional linear inverse problems often arise, for example, from discretized models in medical imaging. The forward operator, or matrix in the linear equation is built based on available information on the target. Though in practice it is unavoidable that the matrix models the system with limited accuracy, including the forward operator inaccuracy to the solution of the inverse problem is a relatively unstudied approach. This work tackles such operator inaccuracy in the Bayesian inversion framework assuming certain prior information on the structure of the forward model, with application to {\em diffuse optical tomography} (DOT) under inaccurate information on the head anatomy of the subject. 

A standard deterministic technique for solving linear inverse problems with an inaccurate forward operator is total least squares Tikhonov regularization~\cite{Golub1999}. Although it has connections to some ideas presented in this text, it does not explicitly account for statistical prior information on the inaccuracy in the forward operator. Blind deconvolution is a specific extensively studied application, where the (linear) forward operator is inaccurately known; see, e.g., \cite{Burger01,Chan1998,Justen06,Korolev2018,Perrone2016} and the references therein. Another important category of problems that are closely related to our setting are dynamic inverse problems, where the forward operator changes as a function of time; see, e.g., \cite{Burger2017,Chen2019,Chung2018,Hahn2014,Katsevich2010,USchmitt2002} and the references therein. Motivated by such such problems, \cite{Blanke2020} recently considered iterative regularization for problems with inexact forward operators. The Bayesian approximation error method \cite{Kaipio2006} has also been used for handling uncertainties in forward models; see \cite{Arridge_2006,Kolehmainen2009,Mozumder2013,Tarvainen2010,Tarvainen2009} for its applications to DOT. The idea is to effectively reduce the weight of certain directions, or combinations of measurements from the linear model, which carry most of the inaccuracy. Finally, deep learning methods have recently been introduced, for example, for post-processing errors due to inaccurate tissue-specific baseline optical parameters~\cite{mozumder2024diffuse}.

\subsection{Bilinear Inverse Problem}
In this work, it is assumed that the system matrix allows an approximate representation as a linear combination of basis matrices, with prior information on the distribution of the associated expansion coefficients, e.g., due to principal component analysis (PCA) on the variations in the measurement setup. Including such coefficients as unknowns results in a bilinear inverse problem, which allows for compensating for the uncertainty about the forward model during the solution process, but makes solving the problem more ambiguous. Indeed, even under the simplifying assumption of mutually independent Gaussian priors for the two types of unknowns (the original target and the PCA coefficients), we demonstrate that searching for a {\em maximum a posteriori} (MAP) estimate for the bilinear inverse problem suffers from a nonconvex target function and potentially also from a {\em high} number of local minima, unlike its linear counterpart based on, say, a ``mean forward operator''. This phenomenon can be considered a reflection of the general solution theory for bilinear equations \cite{Johnson09,Johnson14} being far more complicated than that for linear ones. We demonstrate that the problem with multiple local minima disappears if the prior information on the unknowns is strong enough or the level of measurement noise is high enough, but this cannot be considered a major relief in practice.

We consider two simple algorithms for approximating a (local) MAP estimate for our bilinear inverse problem: block coordinate descent (BCD) in the two patches of unknowns and the Gauss--Newton (G--N) method, both of which allow an efficient numerical implementation via the Woodbury matrix formula assuming the number of measurements is relatively low as is the case in many inverse problems, including DOT. Although neither algorithm is guaranteed to converge to a global minimum, both converge to local minima in our numerical experiments, with BCD requiring far more iterations, which reduces its practical applicapility.   These algorithms have previously been introduced and tested for (unregularized) bilinear least-squares problems in \cite{Elden2018}. In addition to the optimization algorithms, we implement Gibbs sampling from the joint posterior probability distribution considering both unknowns. 

In addition to blind deconvolution mentioned above, bilinear or quadratic inverse problems may arise in, e.g., deautoconvolution \cite{Anzengruber16,Gorenflo94}, parallel imaging in MRI \cite{Blaimer04}, and phase retrieval \cite{Millane90,Seifert06}. In particular, Beinert and Bredis have recently introduced a methodology for solving bilinear inverse problems by employing  convex relaxation based on tensorial lifting (with a rank-one constraint) and applying first-order proximal algorithms \cite{Beinert19,Beinert20}; see also \cite{Alberti25}. Such an approach can be computationally infeasible for large-scale imaging problems such as DOT since the dimension of the tensor-product space may become huge, although \cite{Beinert20} also proposes ideas on circumventing this problem.

\subsection{Application to Diffuse Optical Tomography}
We apply the developed algorithms to the severely ill-posed inverse problem of imaging hemodynamic changes with diffuse optical tomography~\cite{arridge1999optical,Nissila_DOI2005}. DOT is a functional near-infrared spectroscopic imaging method, where the relative transparency of tissue and differing absorption spectra of oxygenated (HbO$_2$) and deoxygenated (HbR) hemoglobin to near-infrared light are utilized to image changes in HbO$_2$, HbR and total hemoglobin (HbT) concentrations. We consider a frequency-domain (FD) instrument, which measures the amplitude and phase shift of the detected photon density waves, with a high-density (HD) optode layout. We focus on DOT as a functional brain imaging method, where HbT changes reflect the vascular and metabolic responses to neuronal activity~\cite{Nissila_DOI2005,maria2022imaging}. DOT provides a promising, convenient alternative for imaging cortical brain function in small children, and has already been used, for example, to image responses to affective touch and emotional speech in healthy neonates~\cite{jonsson2018,shekhar2019,maria2020}, and to monitor cerebral hemorrhage in patients at the neonatal intensive care unit~\cite{singh2014mapping}; for a review, see~\cite{lee2017diffuse}. 

DOT image reconstruction is typically based on linearly approximating the relationship between the measured changes in the detected amplitude and phase shift, and the corresponding hemodynamic changes. In this work, the required sensitivity profiles, or Jacobians, are obtained with the Monte Carlo eXtreme (MCX) software by simulating the propagation of ten billion photon packets in the head~\cite{Fang2009,yao2018replay,yan2020,Hirvi_2023}. The simulations require a model of the head anatomy, tissue segmentation and optical parameters, along with the source and detector locations and properties, all of which can be modeled with limited accuracy. In this work, we focus on the challenges in modeling the head anatomy.  

DOT is a functional imaging method, which does not provide detailed information on the anatomical structure. Magnetic resonance imaging (MRI) is the most typical method for obtaining anatomical images in healthy neonates. Yet, in DOT, atlas-based head models have been suggested as an alternative for the individual's own anatomical model~\cite{heiskala2009probabilistic,Custo2010,cooper2012validating,Hirvi_2023}. This requires that we know the head shape of the neonate with some accuracy and can register the atlas model to the target frame. A common approach is to measure a set of fiducial points from the head surface with photogrammetry or a digitizing pen~\cite{maria2022imaging,autti2025simultaneously}.

Avoiding the need to image and segment the individual's own anatomical MR images enable the relatively practical measurement arrangements and simplify the data analysis pipeline in DOT. The convenience of the measurements and the fact that the subject does not have to be alone are valuable especially when imaging neonates and toddlers. However, the atlas provides only an approximation of the true anatomy of the neonate. We recently studied the effects of the variation in the inner anatomy on the FD measurements and difference data over the late preterm to post-term neonates in the database provided by the UCL Human Connectome Project~\cite{Hirvi_2023,CollinsJones2021}. If the exact exterior surface of the target is known, the registration can be exact, which might require a semi-manual approach that is laborious when considering hundreds of atlas models. If the exact surface is not known or not forced by the registration algorithm, the registered atlas model does not have the exactly correct head shape. In this work, we consider both the inaccuracy in the exterior boundary related to linear registration and differences in the inner anatomy. 

The atlas model is typically either another individual of the same age or an age-appropriate, population-level average. Our research question is, can we utilize a database of anatomical models more comprehensively. Previously, Heiskala et al.~\cite{heiskala2009probabilistic} introduced the idea of a probabilistic atlas for modeling the anatomical variation in atlas-based head models for a neonate. To be more specific, they first created an atlas of seven anatomical models that were registered exactly to the target exterior surface using a semi-manual, non-linear method. During the MC simulations, one of the models was selected randomly for each photon packet when computing the Jacobians. Shi et al.~\cite{shi2011infant} generated tissue probability maps for neonates and one- and two-year-olds by registering 95 anatomical models simultaneously to a common space, which did not correspond to the exact head shape of any individual. The registration algorithm utilized inner anatomical structures~\cite{wu2012feature}. For computing the Jacobians, one can either fix the segmentation for each voxel or use continuously varying optical parameters, which carry more information on the underlying variation.

In this work, we use the 215 segmented neonatal voxel models from the UCL database~\cite{CollinsJones2021} and consider a more simple, automatic linear approach to register the atlas models to the target neonate using only landmarks and fiducial points from the head surfaces. Linear registration (here a combination of rotation, translation and scaling) preserves relative tissue thicknesses and volume ratios, but leaves some variation in the registered head geometries. Thus, there is some ambiguity in the registered heads' exterior and interior anatomy in comparison to the target. We then compute the Jacobians separately for each model to form an atlas of Jacobians. We take the average Jacobian as the initial guess and attempt to improve it by moving along PC directions in the Jacobian space limiting the field-of-view (FOV) with the exterior boundary of the target. This approach is justified if the true target Jacobian can be presented as a linear combination of the mean Jacobian and the PCs with reasonable accuracy.

\subsection{Outline}
This text is organized as follows. Our bilinear inverse problem is introduced in Section~\ref{sec:bilinear_inverse_problem}, including its Bayesian formulation under mutually independent Gaussian priors. Section~\ref{sec:MAP} considers computing a MAP estimate for the joint posterior of the actual unknowns and the expansion coefficients; in addition to presenting the two optimization algorithms, the nonconvexity of the target function and the number of local minima are discussed. Section~\ref{sec:CM} introduces a two-block Gibbs sampling approach to computing a {\em conditional mean} (CM) estimate for the joint posterior. DOT is introduced in Section~\ref{sec:DOT}, and our algorithms are applied to its linear inverse problem under a PCA model for the subject's head anatomy in Section~\ref{sec:numerics}. The conclusions are presented in Section~\ref{sec:conclusion}. Finally, an appendix presents the marginalizaton of the joint posterior distribution with respect to the expansion coefficients in the forward operator.

\section{Bilinear Finite-Dimensional Inverse Problem}
\label{sec:bilinear_inverse_problem}
This section introduces the deterministic and Bayesian formulations for our bilinear inverse problem. We mainly follow the notational convention for finite-dimensional bilinear operators in~\cite{Elden2018}.

\subsection{Modeling Forward Operator Inaccuracy}
Consider a finite-dimensional linear inverse problem
\begin{equation}
\label{eq:inv_prob}
    A x = b\, ,
\end{equation}
where $x \in \R^{n}$ is the unknown, $b \in \R^{l}$ carries the data, and $A \in \R^{l \times n}$ is the forward operator that is presumably ill-conditioned and inaccurately known. Let $A_0$ denote the mean over a sample of candidate forward operators $\{A_{i} \}_{i=1}^m \subset \R^{l \times n}$. The straightforward approach to tackling the uncertainty in $A$ would be to substitute $A_0$ for $A$ in \eqref{eq:inv_prob} and aim to solve the resulting approximate inverse problem.

However, under the assumption that the set $\{A_{i} \}_{i=1}^m$ provides a comprehensive description of the possible variations in $A$, employing the sample mean ignores a lot of available information. In this work, we consider representing $A$ as a linear combination of given basis matrices; as a motivation for such an expansion, consider PCA on the sample operators, with the aim to include more prior information on the forward operator in the solution process. In consequence, we present the unknown forward operator as  
\begin{equation}
\label{eq:PCA}
    A \approx A_{0} + \sum_{i=1}^p y_{i} V_{i}\, ,
\end{equation}
where $V_{i} \in \R^{l \times n}$ are the considered basis matrices and $y_{i} \in \R$ are the unknown coefficients that we aim to estimate as a part of an {\em extended} inverse problem. The price one has to pay for such an extension is that \eqref{eq:inv_prob} becomes bilinear in $(y,x) \in \R^{p} \times \R^{n}$.\footnote{We slightly abuse the terminology by calling the extended inverse problem bilinear although it is linear in $x$ and only affine in $y$.}

\begin{remark}
\label{remark:PCA}
There are several possible ways to end up with a representation of the form \eqref{eq:PCA}. In the context of PCA, the most straightforward approach is to consider $\{A_{i} \}_{i=1}^m$ in the Frobenius topology, which reduces the computations to standard PCA in $\R^{ln}$. In the numerical tests of this work, we instead perform row-wise PCA for the sample matrices, leading to principal component (PC) matrices $V_i$ each of which only has a single nonempty row. This choice is later motivated by physical intuition on the measurement setup of DOT as well as a relatively small sample size $m$.
\end{remark}

\subsection{Notation and Deterministic Formulation}

The proposed extended formulation for the considered inverse problem can be given in a tensor form, cf.~\cite{Elden2018}. To this end, define $\Acal \in \R^{l\times p \times n}$ by demanding that $\Acal (\, \cdot \,,i, \, \cdot \,) = V_{i}$ for $i = 1, \dots, p$. That is, fixing the middle index in $\Acal$ reproduces the corresponding basis matrix in~\eqref{eq:PCA}. It follows that \eqref{eq:PCA} can be written equivalently as
\begin{equation}
\label{eq:PCA_tensor}
    A \approx A_0 + \Acal \bigcdot (y)_2 = A_0 + A_{y,2} \, ,
\end{equation}
where $y \in \R^{p}$ and $A_{y,2} = \Acal \bigcdot (y)_2$ denotes the matrix obtained via a tensor-vector multiplication over the second dimension in $\Acal$. Analogously, the considered inverse problem \eqref{eq:inv_prob} can be expressed as
\begin{equation}
\label{eq:tensor_inv_prob}
    A_0 x + \Acal \bigcdot (y,x)_{2,3} = b,
\end{equation}
where $\Acal \bigcdot (y,x)_{2,3}$ is a tensor-vector-vector product over the latter two dimensions in $\mathcal{A}$. We also introduce a shorthand notation for a matrix resulting from operating with $\mathcal{A}$ on $x \in \R^n$:
    \begin{equation*}
    A_{x,3} = \Acal \bigcdot (x)_{3} = 
    \big[ V_{1} x  \ \  V_{2} x  \ \cdots \ V_{p} x
    \big] \in \R^{l \times p}\, ,
\end{equation*}
where the tensor-vector product is taken over the third dimension of $\mathcal{A}$. To measure the size of $\mathcal{A}$, we define its norm as
\begin{equation*}
   \vertiii{\mathcal{A}} = 
    \sup_{\|\upsilon \|=1, \|\xi \|=1} \| \mathcal{A} \bigcdot (\upsilon, \xi)_{2,3} \|.
\end{equation*}
Here and in what follows, $\| \, \cdot \, \|$ denotes the Euclidean norm of the considered real vector space.

The solvability of bilinear equations of type \eqref{eq:tensor_inv_prob} is considered in \cite{Johnson09,Johnson14}; the theory is significantly more complicated than for linear equations and not of direct use for us due to the assumed ill-posedness of the equations considered in this work. Be that as it may, it is worth mentioning that requiring $A_0 \not= 0$ is essential for avoiding a generic nonuniqueness~\cite{Elden2018} for the unregularized problem~\eqref{eq:tensor_inv_prob}: If $A_0 = 0$ and $(y,x)$ is a solution of \eqref{eq:tensor_inv_prob}, then also $(\alpha^{-1} y,\alpha x)$ is a solution for any $\alpha \not= 0$.

\subsection{Bayesian Formulation} 
We are interested in tackling the inverse problem of deducing information on $x$ from \eqref{eq:tensor_inv_prob} in the Bayesian framework. Hence, we consider a probabilistic extension of \eqref{eq:tensor_inv_prob},
\begin{equation}
\label{eq:Btensor_inv_prob}
    B = A_0 X +  \Acal \bigcdot (Y,X)_{2,3} + E\, ,
\end{equation}
where the random variables $B$, $Y$ and $X$ correspond to $b$, $y$ and $x$, respectively, and $E \in \R^l$ models additive measurement noise. In particular, the uncertainty, or randomness, in the forward operator is represented by $Y$. We assume that  $E$, $Y$ and $X$ are mutually independent Gaussian random variables,
\begin{equation}
\label{eq:priors}
        e \sim \mathcal{N}(0,\,\Gamma_1), \quad
        y \sim \mathcal{N}(0,\,\Gamma_2),
        \quad
        x \sim \mathcal{N}(0,\,\Gamma_3), 
\end{equation}
where $\Gamma_1 \in \R^{l \times l}$, $\Gamma_2 \in \R^{p \times p}$ and $\Gamma_3 \in \R^{n \times n}$ are the respective symmetric positive definite covariance matrices. Assuming that the additive noise is independent of the unknowns can be considered standard in Bayesian inverse problems, whereas the reasonability of the assumed independence of $Y$ and $X$ depends on the application. We will motivate the latter assumption in the framework of DOT in Section~\ref{sec:DOT}. 

The joint posterior probability density for the two unknowns can be written with the help of Bayes' formula as
\begin{align}
\label{eq:xy_posterior}
    \pi (y,x \mid b) &\propto \pi (y,x) \pi(b \mid y,x) = \pi (y) \pi (x) \pi \big (b - A_0x - \Acal \bigcdot (y,x)_{2,3} \big) \nonumber \\[1mm]
    &\propto \exp{\left( - \dfrac{1}{2} \Big( \lVert b - A_0x - \Acal \bigcdot (y,x)_{2,3}  \rVert^{2}_{\Gamma_1^{-1}} +\lVert y \rVert^{2}_{\Gamma_2^{-1}} + \lVert x \rVert^{2}_{\Gamma_3^{-1}}   \Big) \right)},
\end{align}
where the dropped constants are independent of $y$ and $x$ and the standard notation $\| z \|_{M}^2 = z^\top \! M z$ is used for a symmetric positive definite matrix $M$.

In this work, we concentrate on algorithms for approximating joint MAP or CM estimates for $X$ and $Y$ based on the posterior density \eqref{eq:xy_posterior}. However, for completeness we point out that under the above assumptions, the posterior density of $X$ can, in fact, be given explicitly via marginalization over $Y$.

\begin{proposition}
\label{prop:marg_post}
Assume the measurement model \eqref{eq:Btensor_inv_prob} and that $E$, $Y$ and $X$ are {\em a priori} mutually independent with the distributions \eqref{eq:priors}. Then, the posterior density of $X$ given $B = b$ is
\begin{align}
\label{eq:marg_post}
    \pi(x \mid b) \propto \big| \Gamma_{y | b, x} \big|^{1/2}
    \exp \Big( \! -\frac{1}{2} \big( \| x \|^2_{\Gamma_3^{-1}}  
   + \| b - A_0 x \|^2_{\Gamma_{b|x}^{-1}}  \big) \Big),
\end{align}
where the omitted constant does not depend on $x$ and
\begin{equation*}
\Gamma_{y | b, x} = \big(\Gamma_2^{-1} + A_{x,3}^{\top}\Gamma_1^{-1}A_{x,3}\big)^{-1} \quad \text{and} \quad \ \Gamma_{b|x} = \Gamma_1 + A_{x,3} \Gamma_2 A_{x,3}^{\top} 
\end{equation*}
are, respectively, the covariance matrices of the Gaussian distributions for $Y$ given $B=b$ and $X = x$ and for $B$ given  $X = x$.
\end{proposition}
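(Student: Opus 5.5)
Since $Y$ enters the joint density \eqref{eq:xy_posterior} only through a Gaussian factor, I will marginalize it out by direct Gaussian integration:
\[
\pi(x\mid b)=\int_{\R^p}\pi(y,x\mid b)\,{\rm d}y .
\]
The key observation is that, for fixed $x$, the residual is affine in $y$:
\[
b - A_0x - \Acal \bigcdot (y,x)_{2,3} = r - A_{x,3}\,y, \qquad r := b - A_0 x .
\]
Here I use $\Acal \bigcdot (y,x)_{2,3} = A_{x,3} y$, which follows from the definitions of $A_{x,3}$ and the tensor products. Hence the exponent in \eqref{eq:xy_posterior} is, up to the factor $-\tfrac12$,
\[
\|r - A_{x,3}y\|^2_{\Gamma_1^{-1}} + \|y\|^2_{\Gamma_2^{-1}} + \|x\|^2_{\Gamma_3^{-1}} .
\]
This is quadratic in $y$ with Hessian $\Gamma_{y|b,x}^{-1} = \Gamma_2^{-1} + A_{x,3}^\top\Gamma_1^{-1}A_{x,3}$.

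The steps are as follows. First, complete the square in $y$:
\[
\|r - A_{x,3}y\|^2_{\Gamma_1^{-1}} + \|y\|^2_{\Gamma_2^{-1}} = \|y - \mu\|^2_{\Gamma_{y|b,x}^{-1}} + r^\top\big(\Gamma_1^{-1} - \Gamma_1^{-1}A_{x,3}\Gamma_{y|b,x}A_{x,3}^\top\Gamma_1^{-1}\big)r ,
\]
with $\mu = \Gamma_{y|b,x}A_{x,3}^\top\Gamma_1^{-1}r$. Second, integrate the Gaussian in $y$. This gives $(2\pi)^{p/2}|\Gamma_{y|b,x}|^{1/2}$. The power of $2\pi$ is independent of $x$, but the determinant depends on $x$ through $A_{x,3}$ and must be kept. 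This is exactly the prefactor in \eqref{eq:marg_post}. Third, identify the remaining matrix via the Woodbury identity:
\[
\Gamma_1^{-1} - \Gamma_1^{-1}A_{x,3}\big(\Gamma_2^{-1} + A_{x,3}^\top\Gamma_1^{-1}A_{x,3}\big)^{-1}A_{x,3}^\top\Gamma_1^{-1} = \big(\Gamma_1 + A_{x,3}\Gamma_2A_{x,3}^\top\big)^{-1} = \Gamma_{b|x}^{-1} .
\]
The remaining quadratic term is therefore $\|b - A_0x\|^2_{\Gamma_{b|x}^{-1}}$. Combined with the untouched prior term $\|x\|^2_{\Gamma_3^{-1}}$, this yields \eqref{eq:marg_post}. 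The proportionality constant from \eqref{eq:xy_posterior} does not depend on $x$, so it is harmless.

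For the interpretive claims, two observations suffice. Conditioned on $X=x$, the model reads $B = A_0x + A_{x,3}Y + E$. This is a linear Gaussian model in $Y$, so $B\mid X=x \sim \mathcal N(A_0x, \Gamma_1 + A_{x,3}\Gamma_2A_{x,3}^\top)$. The standard linear-Gaussian posterior formula then gives $Y\mid B=b, X=x$ with covariance $\Gamma_{y|b,x}$ and mean $\mu$. Equivalently, one may write $\pi(x\mid b)\propto\pi(x)\pi(b\mid x)$. The Gaussian normalization of $\pi(b\mid x)$ then gives $|\Gamma_{b|x}|^{-1/2}$. By the matrix determinant lemma,
\[
|\Gamma_{b|x}| = |\Gamma_1|\,|\Gamma_2|\,|\Gamma_{y|b,x}^{-1}| ,
\]
so $|\Gamma_{b|x}|^{-1/2} \propto |\Gamma_{y|b,x}|^{1/2}$, and the two routes agree as a consistency check.

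The only step needing care is the bookkeeping of which constants depend on $x$. In particular, the determinant must be retained rather than absorbed into the proportionality constant, and the Woodbury identification must be carried out correctly. The positive definiteness of $\Gamma_1,\Gamma_2$ guarantees all inverses exist.
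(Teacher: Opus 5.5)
Your proposal is correct and follows the paper's appendix proof essentially step for step: you complete the square in $y$ with $\Gamma_{y|b,x}^{-1}$ as the quadratic form, integrate out the Gaussian factor while retaining the $x$-dependent prefactor $|\Gamma_{y|b,x}|^{1/2}$, and apply the Woodbury identity to recognize $\Gamma_{b|x}^{-1}$. Your consistency check via $\pi(x\mid b)\propto\pi(x)\,\pi(b\mid x)$ and the matrix determinant lemma does not appear in the paper, but it is valid and on its own already gives a shorter complete proof.
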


\begin{proof}
   The proof is based on a straightforward marginalization argument that is presented in Appendix~\ref{sec:marg_post}. 
\end{proof}

We complete this section with a brief remark on how reconstructions of $x$ are computed if the forward operator is fixed.

\begin{remark}
\label{remark:fixed}
When a fixed forward operator (or Jacobian matrix) is considered in the numerical examples of Section~\ref{sec:numerics}, the inverse problem is solved for $x$ assuming no variation in $y$. In such a case, the reconstruction is the MAP/CM estimate for the probability density
\[
\pi (x \mid b)\propto \exp{\left( - \dfrac{1}{2} \Big( \lVert b - \hat{A} x \rVert^{2}_{\Gamma_1^{-1}} + \lVert x \rVert^{2}_{\Gamma_3^{-1}}   \Big) \right)},
\]
where the matrix $\hat{A}$ should be clear from the context.
\end{remark}

\section{On computing a maximum MAP estimate}
\label{sec:MAP}
Maximizing the posterior density $\pi (x,y \mid b)$ is equivalent to minimizing the (scaled) negative log-posterior
\begin{equation}
\label{eq:Tikhonov}
    \Phi(y,x) = \big \lVert  b - A_0x - \Acal \bigcdot (y,x)_{2,3} \big \rVert^{2}_{\Gamma_1^{-1}} + \lVert y \rVert^{2}_{\Gamma_2^{-1}} + \lVert x \rVert^{2}_{\Gamma_3^{-1}}.
\end{equation}
In this section, we consider properties of the Tikhonov functional \eqref{eq:Tikhonov} and introduce two simple algorithms that aim to minimize it.

\subsection{On Properties of the Tikhonov Functional}
To start with, we note that the target function $\Phi: \R^p \times \R^n \to \R$ fails to be convex for all parameter choices.
\begin{proposition}
\label{prop:indefinite}
Let $A_0$, $\mathcal{A} \not= 0$, $b$, and the covariance matrices $\Gamma_1$, $\Gamma_2$ and $\Gamma_3$ be as defined in Section~\ref{sec:bilinear_inverse_problem}. Then, $\Phi: \R^p \times \R^n \to \R$ defined by \eqref{eq:Tikhonov} is not convex.
\end{proposition}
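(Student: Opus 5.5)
The plan is to exhibit an explicit violation of the convexity inequality $\Phi\big(\tfrac12(z_1+z_2)\big)\le \tfrac12\big(\Phi(z_1)+\Phi(z_2)\big)$. The two points will each have one block of unknowns equal to zero, and their midpoint will have both blocks nonzero. The idea is that at such a midpoint the bilinear term $\Acal\bigcdot(y,x)_{2,3}$ is switched on and contributes a quartic term in a scaling parameter. At the two endpoints it vanishes, so the data misfit there grows only quadratically.

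First, since $\Acal\neq 0$, I fix vectors $\upsilon\in\R^p$ and $\xi\in\R^n$ with
\[
c := \Acal\bigcdot(\upsilon,\xi)_{2,3}\neq 0 .
\]
Such vectors exist: some entry $\Acal(k,i,j)$ is nonzero, and taking unit coordinate vectors $\upsilon=e_i$ and $\xi=e_j$ gives $c$ a nonzero $k$-th component. For a scalar $s>0$ I then consider
\[
z_1=(s\upsilon,0),\qquad z_2=(0,s\xi),\qquad \tfrac12(z_1+z_2)=\big(\tfrac s2\upsilon,\tfrac s2\xi\big).
\]

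Next I evaluate \eqref{eq:Tikhonov} at the three points, using bilinearity.
\begin{itemize}
\item At $z_1$ the residual is $b$, because $x=0$ kills both $A_0x$ and the bilinear term.
\item At $z_2$ the residual is $b-sA_0\xi$, because $y=0$ kills the bilinear term.
\item At the midpoint the residual is $b-\tfrac s2 A_0\xi-\tfrac{s^2}{4}c$.
\end{itemize}
For the prior terms, the midpoint contributes $\tfrac{s^2}{4}\big(\|\upsilon\|^2_{\Gamma_2^{-1}}+\|\xi\|^2_{\Gamma_3^{-1}}\big)$. The average of the endpoint values contributes $\tfrac{s^2}{2}$ times the same quantity. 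The prior terms are therefore $O(s^2)$ on both sides.

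Finally, I compare the growth rates as $s\to\infty$. The right-hand side $\tfrac12\big(\Phi(z_1)+\Phi(z_2)\big)$ is a polynomial of degree at most two in $s$. The left-hand side $\Phi\big(\tfrac12(z_1+z_2)\big)$ is a polynomial in $s$ whose leading term is $\tfrac{s^4}{16}\|c\|^2_{\Gamma_1^{-1}}$. This coefficient is strictly positive because $\Gamma_1^{-1}$ is positive definite and $c\neq0$. Hence the convexity inequality fails for all sufficiently large $s$, so $\Phi$ is not convex, whatever $A_0$, $b$ and the covariances are.

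There is no real obstacle here. The only point needing care is to confirm that no terms cancel the $s^4$ coefficient on the left. Expanding $\|b-\tfrac s2A_0\xi-\tfrac{s^2}4c\|^2_{\Gamma_1^{-1}}$, the cross terms are $O(s^3)$ at most, so the quartic coefficient is exactly $\tfrac1{16}\|c\|^2_{\Gamma_1^{-1}}>0$.
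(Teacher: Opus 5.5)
Your proof is correct. It works in the same two-dimensional slice as the paper's proof, but it uses the definition of convexity instead of the Hessian.

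The paper also sets $f(\alpha,\beta)=\Phi(\alpha y_0,\beta x_0)$ with $\Acal\bigcdot(y_0,x_0)_{2,3}\neq 0$, but then argues at second order. It expands $f$, computes $|H_f(\alpha,\beta)| = -12\alpha^2\beta^2\|\Acal\bigcdot(y_0,x_0)_{2,3}\|^4_{\Gamma_1^{-1}}$ plus lower-order terms, and concludes that $H_f$ is indefinite for large $\alpha,\beta$. It then uses the chain-rule identity $v^\top H_f v = [v_1y_0^\top\ v_2x_0^\top]\,H_\Phi\,[v_1y_0^\top\ v_2x_0^\top]^\top$ to transfer indefiniteness to $H_\Phi(\alpha y_0,\beta x_0)$.

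Your chord from $(s\upsilon,0)$ to $(0,s\xi)$ is the restriction of the same $f$ to the line $\alpha+\beta=s$. There the decisive term $\alpha^2\beta^2\|c\|^2_{\Gamma_1^{-1}}$ vanishes at both endpoints and is $\tfrac{s^4}{16}\|c\|^2_{\Gamma_1^{-1}}$ at the midpoint. So both proofs rest on the same quartic term.

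What your route buys is elementarity. It needs no differentiation, no chain-rule transfer from $H_f$ to $H_\Phi$, and no tracking of the lower-order terms in the determinant, only a degree count in $s$. It also spells out why suitable $\upsilon,\xi$ exist.

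What the paper's route buys is pointwise information: the Hessian of $\Phi$ is itself indefinite at explicit points $(\alpha y_0,\beta x_0)$ far from the origin. The authors use this form when they contrast the proposition with the subsequent theorem, which shows $H_\Phi$ is positive definite on an origin-centered ball when $\mu$ is small.
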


\begin{proof}
Let $y_0 \in \R^p$ and $x_0 \in \R^n$ be such that $\mathcal{A} \bigcdot (y_0,x_0)_{2,3} \not = 0$. Consider the function $f: \R^2 \to \R$ defined by
\begin{equation*}
f(\alpha, \beta) = \Phi(\alpha y_0, \beta x_0).
\end{equation*}
We demonstrate that $\Phi: \R^p \times \R^n \to \R$ is nonconvex by showing that the Hessian $H_f(\alpha,\beta) \in \R^{2 \times 2}$ is indefinite for suitable $\alpha, \beta \in \R$. Indeed, by virtue of the chain rule,
\begin{equation*}
    \big [v_1 y_0^\top \ v_2 x_0^\top \big] H_\Phi(\alpha y_0, \beta x_0) \begin{bmatrix} v_1 y_0 \\[1mm] v_2 x_0 \end{bmatrix} = v^\top H_f(\alpha, \beta) v
\end{equation*}
for any $v \in \R^2$, which means that $H_f(\alpha, \beta)$ is indefinite only if $H_\Phi(\alpha y_0, \beta x_0)$ is.

A direct calculation gives
\begin{align*}
f(\alpha,\beta) &= \lVert  b - \beta A_0 x_0  - \alpha \beta \Acal \bigcdot (y_0,x_0)_{2,3} \rVert^{2}_{\Gamma_1^{-1}} + \alpha^2 \lVert y_0 \rVert^{2}_{\Gamma_2^{-1}} + \beta^2 \lVert x_0 \rVert^{2}_{\Gamma_3^{-1}} \\[1mm]
&= \alpha^2 \beta^2  \lVert  \Acal \bigcdot (y_0,x_0)_{2,3} \rVert^{2}_{\Gamma_1^{-1}} + \alpha \beta^2 C_1 + \alpha \beta C_2 + \alpha^2 C_3 + \beta^2 C_4 + \beta C_5 + C_6, 
\end{align*}
where the constants $C_i$ depend on $x_0$ and $y_0$, but not on $\alpha$ and $\beta$. In consequence, 
\begin{equation*}
\big| H_f(\alpha,\beta) \big| = -12 \alpha^2 \beta^2 \lVert  \Acal \bigcdot (y_0,x_0)_{2,3} \rVert^{4}_{\Gamma_1^{-1}} + O (\alpha \beta^2)
\end{equation*}
for $\alpha, \beta \geq 1$. Thus, for large enough $\alpha$ and $\beta$, the determinant of $H_f(\alpha,\beta)$ is negative, meaning that $H_f(\alpha,\beta)$ is indefinite, which completes the proof.
\end{proof}

The proof of Proposition~\ref{prop:indefinite} is based on showing that the Hessian of $\Phi$ becomes indefinite at suitable points that are far enough from the origin. More importantly, the nonconvexity of $\Phi$ also manifests itself in the form of multiple local minima, which is demonstrated by the following simple two-dimensional example.

\begin{example}
\label{example:2D}
Let $n=p=1$, $b=1$, $A_0 = 1$, $\Gamma_1 = 1$, $\Gamma_3^{-1} = \Gamma_2^{-1} = \beta \in \R_+$ and define
\begin{equation*}
\mathcal{A}: \R \times \R \ni (y,x) \mapsto xy \in \R.
\end{equation*}
These choices result in
\begin{equation}
\label{eq:simple_Tikhonov}
\Phi(y,x) = (1 - x - yx)^2 + \beta ( y^2 + x^2),
\end{equation}
whose critical points are defined by the solutions
of
\begin{equation*}
\left\{
\begin{array}{l}
- x ( 1 - x - yx) + \beta y = 0, \\[1mm]
- (1+y)( 1 - x - yx) + \beta x = 0.
\end{array}
\right.
\end{equation*}
Solving for $x$ in the second equation and substituting in the first one gives
\begin{equation*}
    -\frac{1+y}{(1+y)^2 + \beta} \bigg( 1 - \frac{1+y}{(1+y)^2 + \beta} -  \frac{y(1+y)}{(1+y)^2 + \beta} \bigg) + \beta y = 0,
\end{equation*}
or equivalently 
\begin{align}
\label{eq:cri_poly}
-(1+y) & \big((1+y)^2 +\beta \big) + (1+y)^2 + y(1+y)^2 + \beta y \big((1+y)^2 + \beta \big)^2  \nonumber \\[2mm] 
& = \beta \big( y^5 + 4 y^4 + 2 (3  + \beta) y^3 + 4 (1 + \beta) y^2  + \beta ( 2 + \beta) y - 1 \big ) = 0,
\end{align}
which is a fifth order polynomial equation in $y$.

The function $\Phi$ is visualized in Fig.~\ref{fig:example1} for two values of $\beta$. For $\beta=0.1$ the equation \eqref{eq:cri_poly} has three real roots, and consequently $\Phi$ has three critical points at $(-0.101,-1.010)$, $(-1.139,-1.744)$ and $(0.698,0.359)$, of which the latter two correspond to local minima and the last one gives the global minimum. On the other hand, for  the larger value $\beta=1$, \eqref{eq:cri_poly} has only one real root, and $\Phi$ thus has only one critical point $(0.492,0.201)$ that gives the global minimum. Hence, stronger regularization or higher noise level forces $\Phi$ to be unimodal in this simple example.
\end{example}

\begin{figure}[t!]
	\centering
 \includegraphics[width=\textwidth]{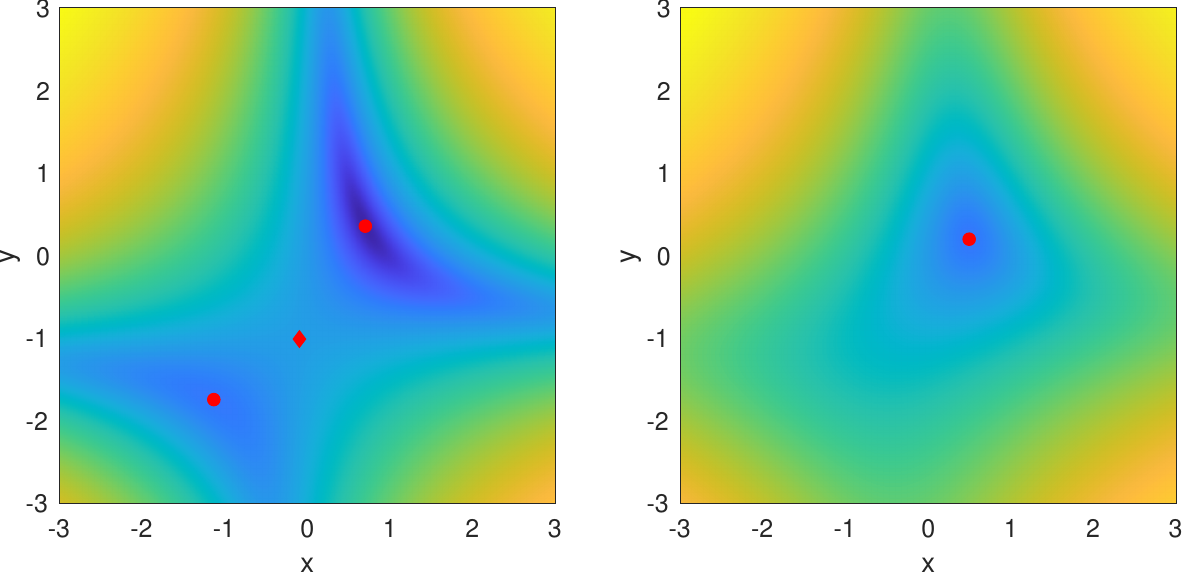}
	\caption{Visualization of $\Phi(x,y)$ from \eqref{eq:simple_Tikhonov}. Left: $\beta = 0.1$, the diamond corresponds to a saddle point and the dots mark the local minimizers. Right: $\beta = 1$ and the dot marks the global minimizer.}
	\label{fig:example1}
\end{figure}

To better understand the reason for the possible existence of multiple local minima for $\Phi: \R^p \times \R^n \to \R$, let us briefly study the equations defining its critical points in a more general setting than in Example~\ref{example:2D}. It follows via straightforward differentiation that the critical points are the real solutions of the system of equations
\begin{equation}
\label{eq:critical_points}
\left\{
\begin{array}{l}
-A_{x,3}^{\top} \Gamma_1^{-1}\big( b - A_0x - \Acal \bigcdot (y,x)_{2,3} \big) + \Gamma_2^{-1} y = 0, \\[2mm]
-(A_0 + A_{y,2})^\top \Gamma_1^{-1}\big( b - A_0x - \Acal \bigcdot (y,x)_{2,3} \big) + \Gamma_3^{-1} x = 0.
\end{array}
\right.
\end{equation}
Solving the second equation in \eqref{eq:critical_points} for $x$ gives
\begin{equation*}
    x(y) = \big((A_0 +  A_{y,2})^\top \Gamma_1^{-1} (A_0 + A_{y,2}) +  \Gamma_3^{-1} \big)^{-1} (A_0 + A_{y,2})^\top \Gamma_1^{-1} b.
\end{equation*}
By Cramer's rule, the components of $x(y,b)$ can be represented in the form
\begin{equation}
x_j(y) = \frac{q_j(y)}{q(y)}, \qquad j=1, \dots, n,
\end{equation}
where $q_j$ is a polynomial of degree $2n - 1$ and $q$ is a positive polynomial of degree $2n$. Plugging $x(y)$ in the first equation of \eqref{eq:critical_points} thus yields a system of $p$ rational equations in $y$. Furthermore, multiplying the equations by $q(y)^2$ results in a system of $p$ polynomial equations of degree at most $4 n + 1$ in $y$, which means that the study of the critical points of $\Phi: \R^p \times \R^n \to \R$ can be reduced to the study of the real roots of a certain system  of polynomial equations. Note that in the above line of reasoning, the roles of $y$ and $x$ can be reversed without any essential changes, that is, the critical points can also be found by solving for the real roots of a system of $n$ polynomial equations of degree at most $4p+1$. 

Because in the numerical examples of this work $n$ is in the order of tens of thousands, finding all  critical points via constructing and solving the aforementioned systems of polynomial equations is arguably infeasible. Thus, our heuristic plan is to simply start from the prior mean $(x,y) = (0,0)$ as the initial guess and apply some generic minimization strategy, hoping for an improvement in the quality of the solution compared to the use of the mean forward operator $A_0$. Although finding the global minimizer of $\Phi$ via such a strategy is unlikely in general, the following theorem shows that in some cases, i.e., when the prior is strong enough and/or the noise level is high enough, there is a good chance of converging to the global minimum.

\begin{theorem}
Let $A_0$, $\mathcal{A}\not=0$, $b$, and the covariance matrices $\Gamma_1$, $\Gamma_2$ and $\Gamma_3$ be as defined in Section~\ref{sec:bilinear_inverse_problem}. Moreover, let $\lambda_{1,{\rm min}}$ be the smallest eigenvalue of $\Gamma_1$ and let $\lambda_{2,{\rm max}}$ and $\lambda_{3,{\rm max}}$, respectively, be the largest eigenvalues of $\Gamma_2$ and $\Gamma_3$. There exists a constant $\mu_0(A_0, \mathcal{A}, b)>0$ such that if
\begin{equation*}
\mu := \frac{\max \{ \lambda_{2,{\rm max}}, \lambda_{3,{\rm max}} \} }{\lambda_{1,{\rm min}}} \leq \mu_0,
\end{equation*}
then $\Phi: \R^p \times \R^n \to \R$ has a unique global minimizer that lies in an origin-centered ball in which $\Phi$ is strictly convex.
\end{theorem}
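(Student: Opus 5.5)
The plan is to combine a sublevel-set argument, which confines every global minimizer to a small origin-centered ball, with a lower bound on the Hessian of $\Phi$ on that ball. Writing $\lambda_{\max} = \max\{\lambda_{2,{\rm max}},\lambda_{3,{\rm max}}\}$, we have $\|y\|^2_{\Gamma_2^{-1}} \ge \|y\|^2/\lambda_{\max}$ and $\|x\|^2_{\Gamma_3^{-1}} \ge \|x\|^2/\lambda_{\max}$. Hence $\Phi$ is continuous and coercive, so a global minimizer exists.

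\textbf{Step 1: confine the minimizers.} Every point with $\Phi(y,x)\le\Phi(0,0)=\|b\|^2_{\Gamma_1^{-1}}\le\|b\|^2/\lambda_{1,{\rm min}}$ satisfies
\begin{equation*}
\|y\|^2+\|x\|^2 \le \lambda_{\max}\,\Phi(0,0) \le \mu\|b\|^2 .
\end{equation*}
So all global minimizers lie in the closed ball of radius $\sqrt{\mu}\,\|b\|$. I will work on the slightly larger open ball $B_\rho$ with $\rho = 2\sqrt{\mu}\,\|b\|$, which is convex. If $b=0$, the minimizer is the origin and the rest of the argument applies with any small radius.

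\textbf{Step 2: bound the Hessian.} Let $r(y,x) = b - A_0x - \Acal\bigcdot(y,x)_{2,3}$ and let $J$ be the Jacobian of the map $(y,x)\mapsto A_0x+\Acal\bigcdot(y,x)_{2,3}$. Differentiating twice gives
\begin{equation*}
H_\Phi = 2J^\top\Gamma_1^{-1}J + 2\,\mathrm{diag}(\Gamma_2^{-1},\Gamma_3^{-1}) - 2R ,
\end{equation*}
where $R$ is the off-diagonal block matrix coming from the bilinear term. It satisfies $(\upsilon,\xi)^\top R\,(\upsilon,\xi) = 2\,(\Gamma_1^{-1}r)^\top \Acal\bigcdot(\upsilon,\xi)_{2,3}$. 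Since $|2\,\upsilon$-$\xi$ cross term$|\le$ the bound below via $2\|\upsilon\|\|\xi\|\le\|\upsilon\|^2+\|\xi\|^2$, we get
\begin{equation*}
|(\upsilon,\xi)^\top R\,(\upsilon,\xi)| \le \vertiii{\Acal}\,\|\Gamma_1^{-1}r\|\,(\|\upsilon\|^2+\|\xi\|^2).
\end{equation*}
Dropping the positive semidefinite Gauss--Newton term $2J^\top\Gamma_1^{-1}J$ and using $\|\Gamma_1^{-1}r\|\le\|r\|/\lambda_{1,{\rm min}}$, we obtain
\begin{equation*}
H_\Phi \succeq 2\Big(\frac{1}{\lambda_{\max}} - \frac{\vertiii{\Acal}\,\|r\|}{\lambda_{1,{\rm min}}}\Big) I .
\end{equation*}
On $B_\rho$ the residual obeys $\|r\|\le \|b\| + \|A_0\|\rho + \vertiii{\Acal}\rho^2$. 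Multiplying by $\lambda_{\max}$, strict positive definiteness therefore follows from
\begin{equation*}
\mu\,\vertiii{\Acal}\big(\|b\| + 2\|A_0\|\sqrt{\mu}\,\|b\| + 4\vertiii{\Acal}\mu\|b\|^2\big) < 1 .
\end{equation*}
The left-hand side tends to $0$ as $\mu\to0$. So there is a $\mu_0(A_0,\Acal,b)>0$ such that this inequality holds for all $\mu\le\mu_0$, making $\Phi$ strictly convex on $B_\rho$.

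\textbf{Step 3: uniqueness.} Every global minimizer lies in the convex set $B_\rho$, on which $\Phi$ is strictly convex. Two distinct global minimizers would then make $\Phi$ strictly smaller at their midpoint, a contradiction. Hence the global minimizer is unique and lies in an origin-centered ball on which $\Phi$ is strictly convex.

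The main obstacle is Step 2: bounding the indefinite bilinear curvature term $R$ correctly, and making the radius $\rho$ and the residual bound scale compatibly with $\mu$, so that one smallness condition controls both. This works because $\rho$ scales like $\sqrt{\mu}$ and the curvature ratio carries an extra factor of $\mu$. The constant $\mu_0$ necessarily depends on $b$, which is consistent with the statement.
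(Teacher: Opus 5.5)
Your proof is correct and follows the paper's argument essentially step for step. You confine all global minimizers to an origin-centered ball of radius of order $\sqrt{\mu}\,\|b\|$ via $\Phi(y_*,x_*)\le\Phi(0,0)$, drop the positive semidefinite Gauss--Newton part of the Hessian, bound the residual-curvature term by $\vertiii{\Acal}\,\|r\|/\lambda_{1,{\rm min}}$, bound $\|r\|$ on the ball, and get uniqueness from strict convexity on a convex set. Your extras are harmless refinements of the same route: existence via coercivity (which the paper tacitly assumes), the $b=0$ case, the doubled open ball, and applying the AM--GM step to the curvature term rather than to the prior terms.
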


\begin{proof}
Let $(y_*, x_*)$ be an arbitrary global minimizer for $\Phi$ and consider its distance from the origin. It must hold
\begin{equation*}
    \| y_* \|_{\Gamma_2^{-1}}^2 + \| x_* \|_{\Gamma_3^{-1}}^2 \leq \Phi(y_*, x_*) \leq  \Phi(0,0) = \| b \|_{\Gamma_1^{-1}}^2,
\end{equation*}
meaning that 
\begin{equation*}
    \frac{1}{\lambda_{2,{\rm max}}} \| y_* \|^2 + \frac{1}{\lambda_{3,{\rm max}}} \| x_* \|^2 \leq \frac{1}{\lambda_{1,{\rm min}}} \| b \|^2.
\end{equation*}
According to a conservative estimate, $(y_*, x_*)$ thus lies in an origin-centered ball $B_\rho \subset \R^{p+n}$ of radius $\rho = \sqrt{\mu} \, \| b \|$.

 Let us then prove that under suitable conditions, $\Phi: \R^p \times \R^n \to \R$ is strictly convex in $B_\rho$. A straightforward calculation demonstrates that the Hessian $H_{\Phi}(y,x)$ of $\Phi$ is (cf.~\eqref{eq:critical_points})
\begin{equation*}
    2 \begin{bmatrix}
         \Gamma_2^{-1} +  A_{x,3}^\top \Gamma_1^{-1} A_{x,3} & A_{x,3}^\top \Gamma_1^{-1} (A_0 + A_{y,2}) - \mathcal{A} \bigcdot (\Gamma_1^{-1} r)_1 \\[2mm]
         (A_0 + A_{y,2})^\top \Gamma_1^{-1} A_{x,3} - (\mathcal{A} \bigcdot (\Gamma_1^{-1} r)_1)^{\top} &  \Gamma_3^{-1} +  (A_0 + A_{y,2})^\top \Gamma_1^{-1} (A_0 + A_{y,2})
    \end{bmatrix},
\end{equation*}
where 
\begin{equation}
\label{eq:residual}
r =  b - A_0 x - \mathcal{A} \bigcdot (y,x)_{2,3} .
\end{equation}
Hence, for $(\upsilon, \xi) \in \R^p \times \R^n$, \begin{align*}
\frac{1}{2} \begin{bmatrix} \upsilon^\top \ \xi^\top \end{bmatrix} H_{\Phi}(y,x) \begin{bmatrix} \upsilon \\[1mm] \xi \end{bmatrix}
& = \| \upsilon \|_{\Gamma_2^{-1}}^2 + \| \xi \|_{\Gamma_3^{-1}}^2 + \| A_{x,3} \upsilon \|_{\Gamma_1^{-1}}^2 + \| (A_0 + A_{y,2}) \xi \|_{\Gamma_1^{-1}}^2 \\
& \ \ \ \ + 2 \upsilon^\top \! A_{x,3}^\top \Gamma_1^{-1} (A_0 + A_{y,2}) \xi - 2 \mathcal{A} \bigcdot \big(\Gamma_1^{-1} r, \upsilon, \xi\big)_{1,2,3} \\[2mm]
&=  \| \upsilon \|_{\Gamma_2^{-1}}^2 + \| \xi \|_{\Gamma_3^{-1}}^2 + \big \| A_0 \xi  + \mathcal{A} \bigcdot (y, \xi)_{2,3} +  \mathcal{A} \bigcdot (\upsilon, x)_{2,3} \big  \|_{\Gamma_1^{-1}}^2 \\[2mm]
  & \ \ \ \ - 2 \mathcal{A} \bigcdot \big(\Gamma_1^{-1} r, \upsilon, \xi\big)_{1,2,3} .
\end{align*}
In particular, $H_{\Phi}(y,x)$ is  positive definite if
\begin{equation*}
\label{eq:posdef}
\| \upsilon \|_{\Gamma_2^{-1}}^2 + \| \xi \|_{\Gamma_3^{-1}}^2 >  2 \mathcal{A} \bigcdot (\Gamma_1^{-1} r, \upsilon, \xi)_{1,2,3}
\end{equation*}
for all $(\upsilon, \xi) \in \R^p \times \R^n$, which is guaranteed if the residual $r = r(y,x)$ satisfies
\begin{equation*}
    \frac{1}{\lambda_{1,{\rm min}}} \| r \|  < \frac{\| \upsilon \|_{\Gamma_2^{-1}}^2 + \| \xi \|_{\Gamma_3^{-1}}^2}{2 \vertiii{ \mathcal{A} } \| \upsilon \| \|\xi \|}.
\end{equation*}
Since
\begin{eqnarray*}
 \| \upsilon \|_{\Gamma_2^{-1}}^2 + \| \xi \|_{\Gamma_3^{-1}}^2   \geq \frac{1}{\lambda_{2,{\rm max}}} \| \upsilon \|^2 +  \frac{1}{\lambda_{3,{\rm max}}} \| \xi \|^2 \geq  2 \frac{\| \upsilon \|  \|\xi \|}{\max \{\lambda_{2,{\rm max}}, \lambda_{3,{\rm max}} \} },
\end{eqnarray*}
the Hessian $H_{\Phi}$ is thus positive definite in $B_\rho$ if
\begin{equation*}
\| r (y,x) \| < \frac{1}{ \mu \vertiii{ \mathcal{A} } }
\end{equation*}
for all $(y, x) \in B_\rho$.

All $(y,x)$ in $B_\rho \subset \R^{p+n}$ satisfy
\begin{align*}
    \| r \| &\leq \vertiii{ \mathcal{A} }  \| y \|  \| x \| + \| A_0 \|  \| x \|   + \|b \| \\[0mm]
    & \leq \frac{1}{2} \vertiii{ \mathcal{A} }  \| b \|^2  \mu + \| A_0 \|  \| b \|  \sqrt{\mu} + \| b \|.
\end{align*}
Hence, $\Phi$ is strictly convex in $B_\rho \subset \R^{p+n}$ if
\begin{equation*}
\frac{1}{2} \| \mathcal{A}\|  \| b \|^2 \mu^2  + \| A_0 \|   \| b \|  \mu^{3/2} + \| b \| \mu < \frac{1}{ \vertiii{ \mathcal{A} } },
\end{equation*}
which is true if $\mu \leq \mu_0$ for small enough $\mu_0$. 

To complete the proof, note that under the condition $\mu \leq \mu_0$, all global minimizers of $\Phi$ lie in the convex set $B_\rho \subset \R^{p+n}$ where $\Phi$ is strictly convex. In consequence, there can only be one global minimizer for $\Phi$.
\end{proof}

\subsection{Block Coordinate Descent}
\label{sec:alternating}
Our first iterative algorithm for minimizing $\Phi$ in \eqref{eq:Tikhonov} is simple BCD. Assume the algorithm is at iterate $(y_k, x_k)$. Fixing $y = y_k$ and considering $\Phi(y_k,x)$ only as a function of $x$ leads to the minimization problem
\begin{equation}
\label{eq:x_min}
     \min_{x \in \R^n} \big \lVert b - (A_0 + A_{y_k,2} ) x \big \rVert^{2}_{\Gamma_1^{-1}} + \lVert x \rVert^{2}_{\Gamma_3^{-1}},
\end{equation}
which means that finding $x_{k+1}$ corresponds to solving a linear least squares problem. Fixing $x = x_{k+1}$ and minimizing $\Phi(y,x_{k+1})$ with respect to $y$ also yields a simple least squares problem,
\begin{equation}
\label{eq:y_min}
     \min_{y \in \R^p} \big \lVert ( b - A_0 x_{k+1} )  - A_{x_{k+1},3} y \big \rVert^{2}_{\Gamma_1^{-1}} + \lVert y \rVert^{2}_{\Gamma_2^{-1}}.
\end{equation}
Alternating between the problems \eqref{eq:x_min} and \eqref{eq:y_min} defines our first algorithm.

Algorithm~\ref{alg:alternating} summarizes the resulting minimization scheme, which is guaranteed to converge to a local minimizer of $\Phi$ (see, e.g., \cite{Xu13}). Note that the solutions of \eqref{eq:x_min} and \eqref{eq:y_min} have been written out explicitly in the algorithm, by resorting to the Woodbury matrix identity. The chosen representations for the updates are efficient if $l < n$ and $l < p$, that is, if the number of data is both lower than the number of unknowns and the number of free parameters in the forward operator. The first assumption is natural for most inverse problems, in particular, for DOT, whereas the validity of the latter assumption depends on the considered model. However, whether $l<p$ or vice versa only affects the choice of the formula for $y_{k+1}$, which is not the bottleneck of the algorithm since certainly $n > p$ in any reasonable inverse problem.

\begin{own_algorithm}
\label{alg:alternating}
\begin{algorithmic}
  \STATE{Select the initial guess $y_0$ and stopping criterion, and set $k=0$.}

 \vspace{1mm}

  \WHILE{Stopping criterion is not met}
\STATE{
\vspace{-5mm}
\begin{align*}
&x_{k+1} = \Gamma_3 (A_0 + A_{y_k,2})^\top \big( (A_0 + A_{y_k,2}) \Gamma_3 (A_0 + A_{y_k,2})^\top + \Gamma_1\big)^{-1} b, \\[0mm]
&y_{k+1} = \Gamma_2 A_{x_{k+1},3}^\top \big( A_{x_{k+1},3} \Gamma_2 A_{x_{k+1},3}^\top + \Gamma_1\big)^{-1} (b - A_0 x_{k+1}), \\[1mm]
&k  \leftarrow k+1.
\end{align*}
\vspace{-5mm}
}
  \ENDWHILE

  \vspace{1mm}

  \RETURN $(y_k,x_k)$.
\end{algorithmic}
\end{own_algorithm}

\subsection{Gauss--Newton Algorithm}
\label{sec:GN}

The second algorithm for minimizing $\Phi(y,x)$ is the G--N method, a step of which corresponds to linearizing the function $\Acal \bigcdot (y,x)_{2,3}$ in the first term of \eqref{eq:Tikhonov} around the current iterate and solving the resulting linear least squares problem. Assuming the knowledge of $(y_k,x_k)$, the next iterate $(y_{k+1},x_{k+1})$ is thus the minimizer of
\begin{equation}
\label{eq:GN_LS}
 \left\lVert  b - \bigg(A_0 x_{k} + \Acal \bigcdot (y_k,x_k)_{2,3} + J_\mathcal{A}(y_k,x_k) \begin{bmatrix} y-y_k \\ x-x_k \end{bmatrix} \bigg) \right \rVert^{2}_{\Gamma_1^{-1}} + \lVert y \rVert^{2}_{\Gamma_2^{-1}} + \lVert x \rVert^{2}_{\Gamma_3^{-1}},
\end{equation}
where $J_\mathcal{A}: \R^p \times \R^n \to \R^{l \times (p + n)}$
is the Jacobian matrix of the bilinear map $\mathcal{A}: \R^p \times \R^n \to \R^{l}$. More precisely,
\begin{equation}
\label{eq:Jacobian}
    J_{\mathcal{A}}(y,x) = 
\big[A_{x,3} \ \ A_0 + A_{y,2} \big].
\end{equation}
Plugging \eqref{eq:Jacobian} in \eqref{eq:GN_LS} reveals that \eqref{eq:GN_LS} can be written as
\begin{equation}
\label{eq:GN_LS2}
 \left\lVert  \big( b + \Acal \bigcdot (y_k,x_k)_{2,3} \big)  -  J_\mathcal{A}(y_k,x_k) \begin{bmatrix} y  \\ x \end{bmatrix}  \right \rVert^{2}_{\Gamma_1^{-1}} + 
\big[y^\top  \ x^\top \big] \Gamma_{2,3}^{-1}  \begin{bmatrix} y  \\ x \end{bmatrix},
\end{equation}
where $\Gamma_{2,3}^{-1} = {\rm diag}(\Gamma_2^{-1}, \Gamma_3^{-1})$ is a block diagonal matrix that is the inverse of the prior covariance for the joint prior distribution of $(Y,X)$.

Instead of directly tackling the least squares problem \eqref{eq:GN_LS2}, we aim to take advantage of the fact that the system matrix $J_\mathcal{A}(y_k,x_k)$ in \eqref{eq:GN_LS2} has far more columns than rows, under the natural assumption $n \gg l$. The underlying trick is to consider the normal equation for \eqref{eq:GN_LS2} and write its solution using the Woodbury matrix identity, thus exploiting the low rank structure of $J_\mathcal{A}(y_k,x_k)^\top \! J_\mathcal{A}(y_k,x_k)$. However, to circumvent straightforward but lengthy calculations, we cut through the chase by interpreting \eqref{eq:GN_LS2} as a scaled negative log-posterior for a linear inverse problem with data $b + \Acal \bigcdot (y_k,x_k)_{2,3}$, system matrix $J_\mathcal{A}(y_k,x_k)$, and zero-mean Gaussian prior and additive noise with covariance matrices $\Gamma_1$ and $\Gamma_{2,3}$, respectively. This interpretation allows one to employ a well-known formula for the mean/mode of the posterior \cite[Theorem~3.7]{Kaipio2006} to write the minimizer of \eqref{eq:GN_LS2} as 
\begin{equation*}
\begin{bmatrix} y_{*}  \\ x_{*} \end{bmatrix}
= \Gamma_{2,3} J_\mathcal{A}(y_k,x_k)^{\top} \big( J_\mathcal{A}(y_k,x_k) \Gamma_{2,3} J_\mathcal{A}(y_k,x_k)^\top + \Gamma_1 \big)^{-1} \big( b + \Acal \bigcdot (y_k,x_k)_{2,3} \big),
\end{equation*}
which only involves operating with the inverse of a matrix with dimension $l$. Finally, instead of directly using this formula for computing the next iterate of the G--N minimization scheme, we instead solve for the update
\begin{equation}
\label{eq:GN_update}
p_{k+1} = \begin{bmatrix} y_{*}  \\ x_{*} \end{bmatrix} - \begin{bmatrix} y_{k}  \\ x_{k} \end{bmatrix} 
\end{equation}
to allow taking more conservative steps during the minimization process.

The complete G--N scheme for minimizing $\Phi(y,x)$ is summarized in Algorithm~\ref{alg:GN}. Note that the step size parameter $\kappa$ could in principle be chosen adaptively (cf.~\cite{Elden2018}).

\begin{own_algorithm}
\label{alg:GN}
\begin{algorithmic}
  \STATE{Select the initial guess $(y_0, x_0)$, the stopping criterion, and a step size parameter $\kappa \in (0, 1]$. Set $k=0$.}

 \vspace{1mm}

  \WHILE{Stopping criterion is not met}
\STATE{\hspace{2mm} $\rhd$ Form $J_\mathcal{A}(y_k,x_k)$ via \eqref{eq:Jacobian}.}
\STATE{\hspace{2mm} $\rhd$ Compute $p_{k+1}$ via \eqref{eq:GN_update}.}
\STATE{\hspace{2mm} $\rhd$ Set $$
\begin{bmatrix} y_{k+1}  \\ x_{k+1} \end{bmatrix} = \begin{bmatrix} y_{k}  \\ x_{k} \end{bmatrix} + \kappa p_{k+1}.
$$
}
  \ENDWHILE

  \vspace{1mm}

  \RETURN $(y_k,x_k)$.
\end{algorithmic}
\end{own_algorithm}

\section{Conditional Mean Estimate}
\label{sec:CM}
An alternative approach to solving the Bayes\- ian inverse problem \eqref{eq:Btensor_inv_prob} is to estimate the expected value of the posterior \eqref{eq:xy_posterior},~i.e., compute the CM estimate, by some Markov Chain Monte Carlo (MCMC) integration method. Among the plethora of MCMC techniques, we only consider a variant of the Gibbs sampler in this work.

\subsection{Gibbs Sampler}
Following,~e.g.,~material in \cite[Section~3.6.3]{Kaipio2006}, we implement a Gibbs sampler for the joint posterior \eqref{eq:xy_posterior} by partitioning the unknown $(y,x)$ into two natural blocks, namely $y$ and $x$. To this end, notice that fixing $y = y_k$ in \eqref{eq:xy_posterior} gives
\begin{align}
\label{eq:x_cond_y}
    \pi(x \mid b, y_k) & \propto
     \exp{\left( - \dfrac{1}{2} \Big( \lVert b - (A_0 + A_{y_k,2})x  \rVert^{2}_{\Gamma_1^{-1}} + \lVert x \rVert^{2}_{\Gamma_3^{-1}}   \Big) \right)} \nonumber \\[1mm]
    &\propto \exp \Big( -\frac{1}{2} \lVert x - \hat{x}_{b,y_k} \rVert^2_{\Gamma_{x | b, y_k}^{-1}} \Big), 
    \end{align}
where the dropped constants do not depend on $x$ and the latter Gaussian representation follows from \cite[Theorem~3.7]{Kaipio2006} with the mean and covariance
\begin{align*}
    \hat{x}_{b,y_k} &= \Gamma_3 (A_0 + A_{y_k, 2})^{\top} \big( (A_0 + A_{y_k,2}) \Gamma_3 (A_0 + A_{y_k,2})^{\top} + \Gamma_1 \big)^{-1} b, \\[1mm]
    \Gamma_{x | b, y_k} &= \Gamma_3 \! - \! \Gamma_3 (A_0 + A_{y_k,2})^{\top} \! \left( (A_0 + A_{y_k,2}) \Gamma_3 (A_0 + A_{y_k,2})^{\top} \! + \Gamma_1 \right)^{-1} \! \! (A_0 + A_{y_k,2}) \Gamma_3,
\end{align*}
respectively. Analogously, fixing $x = x_{k+1}$ gives 
\begin{equation}
\label{eq:y_cond_x}
    \pi(y \mid b, x_{k+1}) \propto \exp \Big( -\frac{1}{2} \lVert y - \hat{y}_{b,x_{k+1}} \rVert^2_{\Gamma_{y | b, x_{k+1}}^{-1}} \Big),
\end{equation}
where
\begin{align*}
    \hat{y}_{b,x_{k+1}} &= \Gamma_2 A_{x_{k+1}, 3}^{\top} \big( A_{x_{k+1},3} \Gamma_2 A_{x_{k+1},3}^{\top} + \Gamma_1 \big)^{-1} (b -A_0 x_{k+1}), \\[1mm]
    \Gamma_{y | b, x_{k+1}} &= \Gamma_2  -  \Gamma_2 A_{x_{k+1},3}^{\top}  \big( A_{x_{k+1},3} \Gamma_2 A_{x_{k+1},3}^{\top}  + \Gamma_1 \big)^{-1}  A_{x_{k+1},3} \Gamma_2.
\end{align*}
A two-block Gibbs sampler can be implemented by making random draws in turns from the Gaussian distributions defined by \eqref{eq:x_cond_y} and \eqref{eq:y_cond_x}: 

\begin{own_algorithm}
\label{alg:Gibbs}
\begin{algorithmic}
  \STATE{Select the initial guess $y_0$ and the sample size $K$.}

 \vspace{1mm}

  \FOR{$k=0, \dots, K-1$}
\STATE{\hspace{2mm} $\rhd$ Draw $x_{k+1}$ from the density \eqref{eq:x_cond_y}.}
\STATE{\hspace{2mm} $\rhd$ Draw $y_{k+1}$ from the density \eqref{eq:y_cond_x}.}
  \ENDFOR

  \vspace{1mm}

  \RETURN $\{(y_k, x_k)\}_{k=0}^{K-1}$.
\end{algorithmic}
\end{own_algorithm}

The sample mean and covariance of the sample $\{(y_k, x_k)\}_{k=0}^{K-1}$ produced by Algorithm~\ref{alg:Gibbs} can be used as estimates for the mean and covariance of the posterior~\eqref{eq:xy_posterior}.

\section{Linearized Diffuse Optical Tomography}
\label{sec:DOT}
Some of the presented algorithms have previously been applied to simple test cases in the Master's thesis~\cite{AHakulaMT}. As an approximately real-world application, we here consider imaging simulated activity in a neonatal head with DOT using an atlas of candidates for the unknown inner anatomy. In the context of \eqref{eq:inv_prob}, the unknown of primary interest $x$ contains the changes in the absorption coefficients of each voxel, or optically homogeneous 3D pixel that the head is discretized into. Absorption coefficient changes reconstructed with measurements at a single near-infrared wavelength are directly convertible into changes in HbT at the wavelength of 798\,nm, since the extinction coefficients of HbO$_2$ and HbR are the same~\cite{cope1991development}. The data vector $b$ contains the FD difference measurements. Finally, the forward operator $A$ is the sensitivity profile, or Jacobian matrix, where the rows correspond to the measurements and the columns contain the derivatives of the measurement with respect to absorption changes in the voxels. The linearized model is motivated by the assumption that the changes in absorption are relatively low in contrast.     

The sample of candidate forward operators $\{A_{i} \}_{i=1}^m \subset \R^{l \times n}$ is obtained by simulating the Jacobians separately in each of the 215 neonatal head models, originally from the UCL library but registered linearly to the same reference head. The registration process is described in detail below. None of the linearly registered models presents the inner anatomy or head shape of another individual exactly, but we hope that the atlas of 214 models covers the possible options so that our algorithm can find a better forward operator for each registered individual than the mean Jacobian over all other neonates.

The FD difference measurements consist of two data types, log-amplitude ($\ln {\sf A}$) and phase shift ($\varphi$), and we can reformulate the previous formulas to explicitly present them by using matrices with block structure. We assume that the noise for the different data types are mutually independent, which means that the covariance matrix $\Gamma_1$ can be written as $\Gamma_1 = {\rm diag}(\Gamma_1^{\ln {\sf A}}, \Gamma_1^{\varphi}) \in \R^{2l \times 2l}$. Since the PCA is performed independently for the two data types, the covariance matrix $\Gamma_2$ becomes $\Gamma_2 = {\rm diag}(  \Gamma_2^{\ln {\sf A}}, \Gamma_2^{\varphi} ) \in \R^{2p \times 2p}$. Moreover, by letting $\Acal \in \R^{2l \times 2p \times n}$, and fixing either $y$ or $x$, we obtain the matrices
\begin{equation}
    A_{y,2} = 
    \begin{bmatrix}
        A_{y,2}^{\ln {\sf A}} \\[1mm]
        A_{y,2}^{\varphi}
    \end{bmatrix} \in \R^{2l \times n}, \quad A_{x,3}  = {\rm diag} \big( A_{x,3}^{\ln A},  A_{x,3}^{\varphi}\big)
     \in \R^{2l \times 2p}.
\end{equation}
The expected forward operator, and the measurements are expressed as
\begin{equation}
    A_0 = 
    \begin{bmatrix}
        A_{0}^{\ln {\sf A}} \\[1mm]
        A_{0}^{\varphi}
    \end{bmatrix} \in \R^{2l \times n},
    \quad \mathrm{and} \quad
    b = 
    \begin{bmatrix}
        b^{\ln {\sf A}} \\ 
        b^{\varphi}
    \end{bmatrix} \in \R^{2l}.
\end{equation}
With these notations, the posterior density and reconstruction algorithms are as presented in previous sections.

Furthermore, as pointed out in Remark~\ref{remark:PCA}, we do not perform PCA on the whole log-amplitude or phase shift Jacobians, but on the rows of these Jacobians, i.e., on the Jacobians/gradients of individual measurements with respect to the voxel-vise absorption changes. The Jacobian for one measurement, i.e., one source--detector pair, is typically described to have highest sensitivity along a ``banana-shaped'' region between the optodes, with the depth of the curve depending on the source--detector separation (SDS) as well as the measurement type. The anatomy -- for example the location of sulci -- affects the profile. Our motivation for performing the PCA row-wise is that the variation in these profiles is a more intuitive target for the PCA compared to the whole Jacobian $A$, for which the shortest channels with the highest sensitivities could dominate. Furthermore, log-amplitude and phase shift have different magnitudes and units, thus mixing them in the PCA would not be straightforward.  

\begin{remark}
The Jacobians correspond to the forward operator matrices in the context of this work, but in general the forward problem in DOT is to solve the distribution of light (flux) within the medium, whereas the reconstruction of absorption changes with the Jacobian from difference data is the (linearized) inverse problem. 
\end{remark}

\subsection{Models}
We used the neonatal head model database presented in~\cite{CollinsJones2021} that was created in collaboration between University College London (UCL) and the Centre for the Developing Brain at King's College London, using data from the Developing Human Connectome Project (dHCP). As initial preparations, we present the segmented voxel models, the landmarks and other cranial reference points in 0.5-mm resolution in the head coordinate frame defined by the landmarks as described in~\cite{CollinsJones2021}. The five manually-selected landmarks are the nasion (Nz), inion (Iz), right (RPA) and left pre-auricular points (LPA), and the approximate highest point on the top of the head (Cz). The 249 automatically-defined cranial reference points are a subset of the 345 positions according to the 10--5 system, which is a standard method for placing electrodes in high-density electroencephalography (EEG)~\cite{oostenveld2001five,CollinsJones2021}. The voxel models for all neonates are fixed into common matrix size, with the center of matrix at the approximate head center as defined by Iz and Cz. We refine the segmentation of the cerebrospinal fluid (CSF) by separating the subarachnoid layer from the CSF in the sulci and ventricles, and narrow the number of brain tissue types to only include gray (GM) and white (WM) matter, in the same way as described in~\cite[Sections~3.1--3.2]{Hirvi_2023}. In this work, we include all of the 215 early preterm to post-term neonates with combined gestational and chronological age range of 29.3--44.3 weeks to maximize the sample size for the PCA.    

We assume to know the head shape of the target neonate as given by the landmark and other reference points detected from the target head surface. To form the sample of candidate head models and eventually Jacobians, we use the reference points to register the other neonates' head models to the target. For simplicity, instead of forming 215 sets of head models registered to a different geometry, we selected one of the neonates as the reference head shape. Now, it is sufficient to register each of the remaining 214 neonates only once to a target head shape and simulate the Jacobians once. We used the same median-sized, 41.7-week (combined gestational and chronological age) reference neonate as motivated in~\cite[Section~3.3]{Hirvi_2023}. In the following analysis, we can still consider any of the registered newborns as the target to approximate the performance of our algorithms for varying models, although the optimal approach would be to perform the registrations separately for each target.

\subsubsection{Head Model Registration}
\label{sec:registration}
We register each of the 214 neonates to the reference by matching the corresponding landmarks and the 10--5 reference points. As the initial registration, we translate the head to match the midpoints between the Iz and the Nz, place the origin at the reference Nz, and solve a combined rotation and translation using only the landmarks. The goal is to match the orientation of the head to the reference, which may not initially match due to variations in the selected locations of the anatomical landmarks. A typical initial orientation mismatch seems to be a slight tilt due to variation in the selected LPA and/or RPA locations. 

The optimal rotation and translation are solved in the least-squares sense with the singular value decomposition as explained in~\cite{arun1987}. The implemented algorithm is also known as 
the Kabsch/Kabsch-Umeyama Algorithm~\cite{kabsch1976solution,umeyama1991least,lawrence2019purely}, where it usually refers to solving the optimal rotation matrix after centering the point sets, i.e., moving the respective centroids to the origin. However, here we solve both the rotation (for centered points) and translation explicitly. We compare solving the transformation with all landmarks, versus excluding one landmark at a time. If the best initial registration is obtained without one of the landmarks, the respective landmark is replaced by projecting the reference landmark on the neonate's head surface after the transformation. Thus, here we assume that (only) one landmark position can be less accurate and attempt to fix it. The 10--5 reference points have been computed automatically based on the landmarks~\cite{CollinsJones2021}, thus it would be challenging to correct the orientation without modifying the reference points if all point data would be used instead of the landmarks.

After fixing the head orientation, we solve a combined scaling and translation that minimizes the sum of the surface ($\mathrm{SRE}$) and the landmark registration error ($\mathrm{LRE}$) (cf.~\cite[Eq.\ (5)]{koikkalainen2004} and \cite[Eq.\ (20)]{Hirvi_2023}), defined as the mean squared Euclidean distances between the 10--5 reference points and the landmarks, respectively. The transformation is solved in the least-squares with the same idea as in~\cite[Eq.\ (21)]{Hirvi_2023}, where the points are weighted to implement the averaging.

The described registration approach was selected since solving a full affine transformation between all fiducial points can lead to unwanted shear transformation of the head. The main focus was to place the measurements over the same region as guided by the anatomical landmarks. After the registration procedure was completed in 0.5-mm resolution, the models were converted to 1-mm voxel side length for the following steps. 

\subsubsection{Optodes and Surface Phenomena}
\label{sec:optodes}
We modified the optode layout of an actual measurement probe used with the DOT instrument at Aalto University (see~\cite[Fig.\ 2.b]{maria2022imaging}) by adding two detectors on each row to increase the density of optode pairs and include shorter channels. The modified HD probe includes 15 sources and 21 detectors. 

The probe was projected to the left hemisphere of the reference head and the optode locations were optimized along the surface to restore the original optode separations using the algorithm from~\cite{hirvi2019master}, also applied in~\cite{maria2022imaging, SHEKHAR2024177, autti2025simultaneously}. The Iso2Mesh software (\url{https://github.com/fangq/iso2mesh}; accessed 11/10/2025) was used to create a smooth triangular surface mesh for the optimization~\cite{fang2009tetra,tran2020}. The optodes were further projected on the surface of each registered model individually. We used MATLAB's isosurface-function to create dense surface meshes that roughly follow the voxel boundaries by splitting voxels~\cite{yan2020}. 

The shortest SDS is approximately 5\,mm, and we only include optode pairs with SDS under 40\,mm, as in~\cite{fogarty2025functional}, to exclude channels with high levels of stochastic noise and to manage the memory requirements in the PCA. This gives us in total $l=210$ source--detector pairs. We used a collimated Gaussian beam source with waist radius of 1.25\,mm. The source intensity modulation frequency was set as $f=100$\,MHz. Specular reflection was ignored, thus all photon packets enter the tissue with the maximum initial weight of one. The detector area was defined as the intersection of a sphere with radius of 1.82\,mm and the exterior surface, as in~\cite{maria2022imaging,SHEKHAR2024177}. We used the MCX default isotropic detector model and enabled reflections when light exits the tissue. 

\subsubsection{Optical Parameters}
The selected tissue-specific optical parameters are listed in Table \ref{table:param}. We considered values reported for 800\,nm in published literature as close enough to be representative of 798\,nm for the purposes of this paper. As mentioned earlier, at 798\,nm the absorption changes can be scaled to get HbT changes~\cite{cope1991development}. The refractive index was set to a constant 1.4 in all tissue types~\cite{bolin1989refractive,simpson1998near,van1993optical,Fukui2003} to avoid challenges with reflections at irregular inner boundaries. 

\begin{table}[t!]
\centering
\caption{Selected literature-based optical parameters for 798\,nm. CSF1: semidiffusive CSF in the subarachnoid region. CSF2: less diffusive CSF in the sulci and ventricles. Gray and white matter parameters are given separately for the youngest preterm (less than 34 weeks combined gestational+chronological age) and older neonates.}  
\begin{tabular}{ccccc}
 \toprule
 Tissue Type & $\mu_{a}$ [mm$^{\mathrm -1}$] & $\mu_{s}$ [mm$^{\mathrm -1}$] & $g$ & $n$\\ 
 \midrule
  Scalp \& Skull & 0.016 & 16 & 0.9 & 1.4 \\
  CSF1 & 0.004 & 1.6 & 0.9 & 1.4 \\
  CSF2 & 0.002 & 0.01 & 0.9 & 1.4 \\
  Gray Matter & 0.048 & 3/25 & 0.8/0.98 & 1.4 \\
  White Matter & 0.037 & 3/50 & 0.8/0.98 & 1.4 \\	 
 \bottomrule	
\end{tabular}
\label{table:param}
\end{table}

The parameters for the combined scalp \& skull layer are roughly the averages of the values reported for adult Caucasian dermis ($\mu_a$ $\approx$ 0.013\,mm$^{-1}$, $\mu_s'$ $\approx$ 1.9\,mm$^{-1}$, $g$ = 0.9) and subdermis ($\mu_a$ $\approx$ 0.009\,mm$^{-1}$, $\mu_s'$ $\approx$ 1.2\,mm$^{-1}$, $g$ = 0.9)~\cite{simpson1998near} and pig skull ($\mu_a$ $\approx$ 0.024\,mm$^{-1}$, $\mu_s'$ $\approx$ 1.9\,mm$^{-1}$, $g$ = 0.94)~\cite{firbank1993measurement} at 800\,nm. These values were originally selected for the skull in~\cite{okada2003,Fukui2003}, and (nearly) the same values were used for the combined scalp \& skull layer in~\cite{jonsson2018}. 

The CSF parameters are as in~\cite{Hirvi_2023}, except that the scattering for the sulci was lowered to the value used in~\cite{okada2003,maria2022imaging} for CSF with less arachnoid trabeculae to slightly highlight the effect of anatomical variation in the location of the sulci. The $g$-factor for both CSF types was set to 0.9, which is the value used in~\cite{okada2003} for the arachnoid trabeculae that are assumed to be the main scatterers in CSF~\cite{okada2003}.

The optical parameters for GM and WM are in accordance with the postmortem measurements in \cite{van1993optical} from the cerebrum of two neonates with gestational ages of 28 and 40 weeks. Since \cite{van1993optical} observed ``very little differentiation'' between GM and WM at 28 weeks, we decided to use partly different optical parameters for the preterm neonates with combined gestational and chronological age under 34 weeks (half-way between at 28 and 40 weeks; 22/215 neonates in total) compared to late preterm and older gestational ages. We took the absorption coefficients from \cite{Fukui2003}, which were originally from \cite{van1993optical}, and used these for all gestational ages, since the coefficients for GM and WM are within the range of values reported for the combined GM and WM at 28 weeks. We computed the $g$-factor for the early preterm brain as the average of the values measured with the goniometer system for two samples at 28 weeks, and picked the $g$-factors for GM and WM in older neonates according to the values reported for a single sample of GM and WM at 40 weeks. We picked the corresponding effective/reduced scattering coefficients similarly from the reported averages over 2--8 samples from the 28-week brain and the 40-week GM and WM obtained by combining integrating sphere measurements with the goniometer measurements and MC simulations. We then utilized the formula $\mu_s' = (1-g)\ \mu_s$ to solve the scattering coefficients that match the selected $g$-factors giving the correct effective scattering. 

\subsection{Jacobian Simulation and Principal Component Analysis}
We used the precompiled, latest release version 2025.10 Kilo-Kelvin of the MCX/MCXLAB software for the Monte Carlo simulations. We simulated 10 billion photon packets for each source (15) and each neonate (215). The total detected weight ($I$), amplitude (${\sf A}$) and phase ($\varphi$) can be computed from the final weights of detected photon packets ($w_{p}$) for each source--detector pair as (see~\cite[Eqs.\ (1--3)]{Hirvi_2023})
\begin{equation}
\label{eq:MCXforwdata}
\left\{
\begin{array}{l}
{\displaystyle I =  \frac{1}{N_{\rm input}} \sum_{p} w_{p}\, ,} \\[4mm]
{\displaystyle X = \frac{1}{N_{\rm input}} \sum_{p} w_{p} \cos{(2\pi f t_{p})}\, ,} \\[4mm]
{\displaystyle Y = \frac{1}{N_{\rm input}} \sum_{p} w_{p} \sin{(2\pi f t_{p})}\, ,} \\[5mm]
{\displaystyle {\sf A} = \sqrt{X^2 + Y^2}\, ,} \\[1mm]
{\displaystyle \varphi = \arctan(Y,X)\, ,} 
\end{array}
\right.
\end{equation}
where $t_p$ is the time-of-flight for photon packet $p$ and $N_{\rm input}$ is the number of input photon packets, which here effectively normalizes the input source strength to one. The total detected weight $I$ can thus be considered as the overall detection probability for the respective measurement setup, and the weight could be approximately scaled to match real measurements by considering the real input power, fiber losses, etc. 

The Jacobians for the log-amplitude and phase measurements with respect to the voxel-wise absorption coefficients were computed using the ``rf replay'' mode of MCXLAB; see \cite[Section 2.2.1 \& Eqs.\ (8--9)]{Hirvi_2023}. The Jacobians were initially simulated with 1-mm voxel side length, but they were also converted to 2-mm resolution: A column in a Jacobian associated to a 2-mm voxel in the coarser discretization is obtained by summing the columns associated to the 1-mm subvoxels in the corresponding high-resolution Jacobian. The field-of-view (FOV), i.e., the region that is included in the reconstructions, contains voxels for which the sensitivity is over 1\% of the maximum brain sensitivity for any source--detector pair in the 1-mm resolution. This criterion was adapted from~\cite{maria2022imaging}. 

We perform PCA for the measurement-wise Jacobians, i.e., for the rows of the Jacobians, with MATLAB's pca-function in both resolutions. The motivation for also considering the 2-mm resolution was to reduce the number of variables for the PCA. The variances in the principal directions are used to form a diagonal covariance matrices $\Gamma_{2}$ for $y$. The Jacobian of the target neonate is excluded from the PCA, and the PCA is limited to voxels that are within the FOV of at least one of the 214 registered models \textit{and} inside the target head. In other words, we assume that the exact shape of the target head is known at voxel-level. However, the atlas FOV does not cover regions that are not inside any of the registered models.

\subsection{Noise}
\label{sec:noise}
The components of the additive measurement noise are assumed to be mutually independent, leading to a diagonal noise covariance. The standard deviation (SD) of the noise in the absolute amplitude ($\sigma_{\sf A}$) was calculated from the simulated values for the amplitude and total detected weight, corresponding to the intensity from a continuous-wave light source, as
\begin{equation}
\label{eq:ANoise}
\sigma_{\sf A} = 
\left\{
\begin{array}{ll}
6.5 \times 10^{-7}\ \sqrt{I}, & \ \text{if\ } I \leq 1.2 \times 10^{-5}, \\[1mm]
1.9 \times 10^{-4}\ {\sf A}, &\ \text{otherwise.}
\end{array}
\right.
\end{equation}
The SD of noise added to the absolute phase ($\sigma_{\varphi}$) was approximated as 
\begin{equation}
\label{eq:PhaseNoise}
\sigma_{\varphi} = 
\left\{
\begin{array}{ll}
\dfrac{1.3 \times 10^{-4}\, \degree}{\sqrt{I}}, &\  \text{if\ } I \leq 1.2 \times 10^{-5}, \\[3mm]
0.038\degree, & \ \text{otherwise.}
\end{array}
\right.
\end{equation}
The resulting signal-to-noise ratio (SNR) for the amplitude was between 171 and 5300 and the phase noise was in the range between 0.038$\degree$ and 1.2$\degree$ for SDS from 5.4\,mm to 40\,mm. Since the SNR for amplitude is relatively high, we used a linear approximation to compute the SD of the additive noise in the log-amplitude measurements (see~\cite[Eq.\ (18)]{mozumder2024diffuse}),
\begin{equation*}
    \sigma_{\log {\sf A}} \approx \dfrac{\sigma_{\sf A}}{{\sf A}},
\end{equation*}
where the error at the smallest SNR is of the order 1/171.

The SDs for difference data were obtained by multiplying the initial SDs by $\sqrt{2}$, which is justified by the assumed independence of different noise realizations. Finally, we assumed that there are 30 independent repetitions of the measurements that can be averaged, which results in a reduction of the SDs by a factor of $\sqrt{30}$. As a result, we get the following formulas for the SD of the noise in the difference data
\begin{equation}
\label{eq:NoiseDiffData} 
\left\{
\begin{array}{l}
\sigma_{\Delta \log A} = 0.26 \times \dfrac{\sigma_{\sf A}}{{\sf A}}, \\[1mm]
\sigma_{\Delta \varphi} = 0.26 \times \sigma_{\varphi},
\end{array}
\right.
\end{equation}
which are used for simulating the zero mean additive Gaussian measurement noise and for forming the noise covariance matrix $\Gamma_1 \in \R^{2l \times 2l}$.

\subsection{Absorption Perturbation Prior}
\label{sec:abs_prior}
Previous choices for the inverse of the prior covariance matrix $\Gamma_3$ for the absorption change include a discrete approximation of the negative Laplacian giving second degree smoothening~\cite{maria2022imaging}, a simple diagonal matrix corresponding to minimum norm regularization~\cite{cooper2012validating} and a combination of the two~\cite{Heiskala2012,Conference2023}. In this study, we model the covariance to decrease exponentially with increasing voxel separation 
\begin{equation*}
\big(\Gamma_3^{\mathrm{temp}}\big)_{ij} =  \sigma^2  \exp \! \bigg(-\frac{\| z_i - z_j\|^2}{2\, c_l^2} \bigg) ,  \qquad i,j=1, \dots , n,
\end{equation*}
and further restrict its spread through
\begin{equation}
\label{eq:Gamma_trunc}
(\Gamma_3)_{ij} = 
\left\{
\begin{array}{ll}
\big( \Gamma_3^{\mathrm{temp}} \big)_{ij}\, , & \  \big( \Gamma_3^{\mathrm{temp}} \big)_{ij} > (0.01 \, \sigma)^2\, , \\[1mm]
0\, , &\    \mathrm{else},
\end{array}
\right.
\end{equation}
to reduce the computational burden via sparsity. Here, $\sigma$ is the voxel-vise standard deviation of the expected absorption perturbation within the FOV, $z_j$ is the midpoint of voxel $j$, and $c_l$ is the correlation length that controls how fast the spatial covariance fades. The selected values for the model parameters are $\sigma$ = 0.003\, mm$^{-1}$ (vs.\ maximum simulated contrast of 0.008\, mm$^{-1}$), and $c_l$ = 3\,mm (vs.\ simulated perturbation radii of 4--6\,mm).  A benefit of the introduced model is that the parameters are easy to interpret. For validation purposes, we sampled the corresponding Gaussian distributions in 1- and 2-mm resolutions to verify that the prior indeed predicts perturbations with reasonable volume and contrast. Observe that even if setting the covariances that are less than 1\% of $\sigma$ to zero in \eqref{eq:Gamma_trunc} affected the positive-definiteness and invertibility of $\Gamma_3$, this would not have any direct consequences on the applicability of Algorithms~\ref{alg:alternating}, \ref{alg:GN} and \ref{alg:Gibbs} since they do no require operating with $\Gamma_3^{-1}$.

The independence of the primary unknown (the voxel-wise absorption changes) and the PC coefficients in \eqref{eq:priors} is motivated by the fact that the variation in the Jacobians over the group of neonates is independent of the (simulated) activity in the brain of one subject (the prior model does not recognize tissue boundaries).

\subsection{Perturbed Models and Data for Numerical Experiments}
\label{sec:perturbation_patterns}
The perturbations, i.e., the regions where the absorption coefficient was modified from the baseline values in Table \ref{table:param}, were selected automatically for each target neonate starting from the midpoints between the seven source--detector pairs visualized in Fig.~\ref{fig:probe_pert}a. For each optode pair, we first project their mid-point to the exterior boundary. This point is used as the center of a half-sphere forming an extracerebral perturbation with radius of 5\,mm, but limited by the thickness of the combined scalp--skull layer, boundary curvature and exclusion of the most exterior boundary voxels. The center point is then projected 2.5\,mm deeper than the brain surface along the surface normal. The first brain perturbation consists of the GM and WM voxels within roughly the deeper half of a sphere with center at the projected location and radius of 6\,mm. The second brain perturbation is obtained by repeating the process at a starting point 10\,mm anterior to the original midpoint and using a radius of 5\,mm for the half-sphere, creating in total one extracerebral and two brain perturbations for each of the seven source--detector pairs. The resulting perturbations at location 2 are visualized in Fig.~\ref{fig:probe_pert}b. Different multi-perturbation patterns were created by activating different combinations of the 21 possible regions.

The increase in the absorption coefficient in the brain is set to 0.008\,$\mathrm{mm}^{-1}$ = 8\,$\mathrm{m}^{-1}$, which corresponds to an increase of 40.8\,$\mu$M in the HbT concentration at 798\,nm~\cite{cope1991development}, and a 17\% contrast compared to the baseline absorption coefficient of GM in Table \ref{table:param}. However, at locations 3 and 7 in Fig.~\ref{fig:probe_pert}, we leave the second brain perturbation on the surface of the cortex and consider a smaller, more realistic contrast of 11\% or 6\,$\mathrm{m}^{-1}$~\cite{dunn2005spatial,heiskala2009probabilistic,heiskala2009significance}, which corresponds to a HbT increase of 30.6\,$\mu$M at 798\,nm~\cite{cope1991development}. The higher and lower HbT change are approximately 2.3 and 1.7 times greater, respectively, than the maximum HbT value in the hemodynamic response function (HRF) in~\cite[Fig.~2a]{shekhar2019}. Here we approximate that the change in HbT is the same in GM and WM, although in reality the contrast in WM should be lower than in GM~\cite{fraser2012white,spencer2025mapping}. The increase in the absorption coefficient in the scalp--skull layer is also set to 6\,$\mathrm{m}^{-1}$ to simplify the visualizations. We included extracerebral perturbations mainly to test
the algorithms -- they are of mathematical rather than physiological interest.

We use two methods to generate the difference data $b$ in \eqref{eq:inv_prob} for the described perturbation patterns: First, to test the algorithms without linearization error, we use the same linear model \eqref{eq:inv_prob} as for solving the inverse problem.  Secondly, we use the actual (nonlinear) differences between simulated log-amplitude and phase values according to \eqref{eq:MCXforwdata} with and without the absorption perturbations. Since MCX implements the Microscopic Beer-Lambert law~\cite{Fang2009,sassaroli2012equivalence,yao2018replay,Hirvi_2023}, the simulated trajectories of photon packets are independent of the absorption coefficients and we can compute the baseline and absorption-perturbed forward data from the same region-wise paths for the detected photon packets. This reduces the computational load and excludes extra stochastic noise from the difference data, making the model for additive noise described in Section \ref{sec:noise} exact, especially since we also compute the Jacobians from the same forward data for each neonate. Naturally, the latter does not hold if the Jacobian is computed in another head model or as the atlas mean.  

\begin{figure}[t!]
    \centering
    \includegraphics[width=\textwidth]{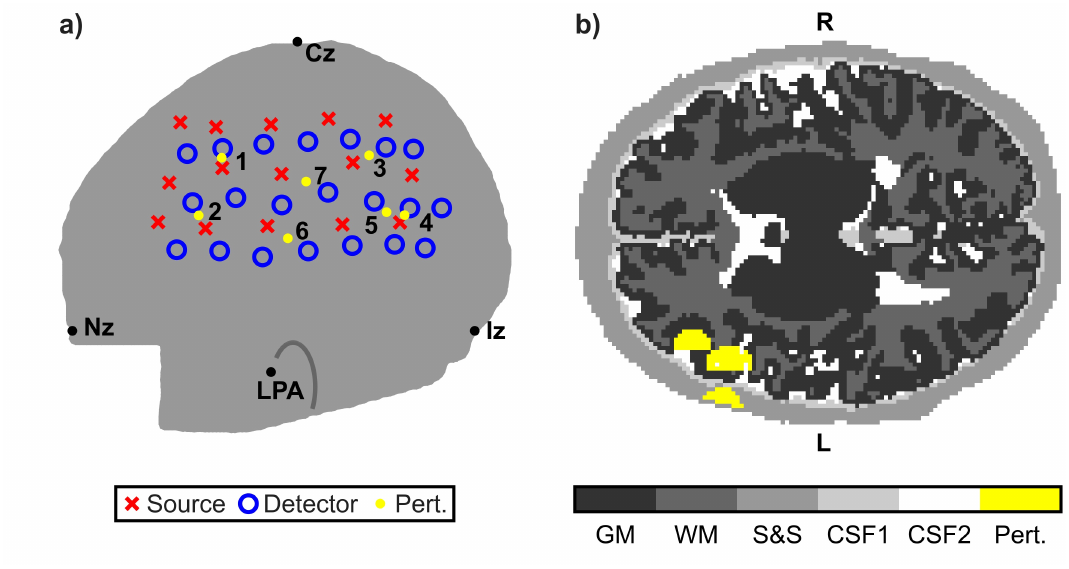}
    \caption{a) The 15 sources (red crosses), 21 detectors (blue rings) and seven perturbation sites (yellow circles; 1--7) on the reference neonate. Nz = nasion, Cz = vertex, Iz = inion, LPA = left pre-auricular point. b) Perturbations at location~2. GM = gray matter, WM = white matter, S\&S = scalp and skull, CSF = cerebrospinal fluid.}
    \label{fig:probe_pert}
\end{figure}

\subsection{Evaluating the Reconstructions}
\label{sec:metrics}
In addition to visual inspection of the reconstructions, we consider two error measures: (i) the contrast-to-noise-ratio (CNR), and (ii) the root-mean-squared-error (RMSE; shortened as E) of the difference between the target absorption perturbation map and the reconstruction in the whole FOV. We modified the definition for CNR in~\cite{nissila2006comparison} by considering the mean reconstructed value over multiple perturbed regions instead of a single maximum value:
\begin{equation}
    \label{eq:CNR_def}
    \rm{CNR} = \dfrac{\langle x\rangle_{\rm pert} - \langle x \rangle_{\rm unpert}}{\sigma_{\rm unpert}}\, ,
\end{equation}
where $ \langle x  \rangle $ is the mean reconstructed value in the indicated region (perturbation or background) and $\sigma_{\rm unpert}$ is the SD of the reconstructions in the background.

\section{Results}
\label{sec:numerics}

\subsection{Registration}
The average registration accuracy indicates the suitability of the Jacobian atlas for modeling the target neonate. We defined the registration error as the mean distance between the registered and the target head surface mesh under the measurement probe, using the dense meshes introduced in Section \ref{sec:optodes}. For the reference neonate, which was the actual registration target in Section \ref{sec:registration}, the median registration error for all 214 neonates is 1.36\,mm (range 0.62--2.64\,mm). If we only consider the mean distance between the optodes, the median error is 1.38\,mm (range 0.60--2.69\,mm). The registration accuracy is not uniform, but weakens especially towards the posterior edge of the imaged region on the reference, increasing the mean errors. Fig.\ \ref{fig:registration} shows the median registration errors when we consider each of the registered neonates as the target instead of the actual reference. The registration accuracy is not the best for the actual reference, since other neonates have mutually more similar original head shapes.

% Average registration errors (distance to mesh/node-wise)
% N38: Mean 2.2 mm, Median 2.2
% N52: Mean 2.4 mm, Median 2.4 mm, 
% N71: Mean 2.5 mm, Median 2.5 mm 
% N173: Mean 1.4 mm, Median 1.3 mm - ok!

% 23.2.2026
% Average registration errors (distance to mesh/node-wise)
%   N4:  Mean 1.37 mm, Median 1.32 mm
%  N46:  Mean 1.27 mm, Median 1.17 mm, 
%  N98:  Mean 1.48 mm, Median 1.39 mm 
% N158:  Mean 1.29 mm, Median 1.25 mm
% N188:  Mean 1.40 mm, Median 1.36 mm
\begin{figure}[t!]
    \centering
    \includegraphics[width=\textwidth]{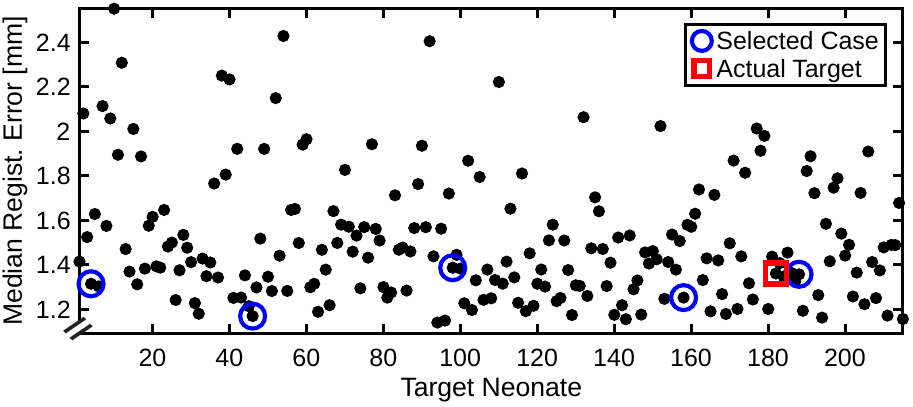}
    \caption{The median of registration errors for the 214 other neonates in the atlas considering one registered neonate head as the target (horizontal axis, ordered by age). The error for each neonate pair is the mean distance between the registered surface meshes under the measurement probe. The actual common target (red square) in the registration was number 182. Blue circles refer to the selections in Table \ref{table:cases}.}   
    \label{fig:registration}
\end{figure}
    
\subsection{Representing Jacobians in the Principal Component Basis}
The ability of a relatively low number of PCs to represent the target Jacobians with reasonable accuracy is another prerequisite for the justifiability of our approach. As the first step, we visualized a number of first PCs over the FOV for some source--detector pairs, i.e., some rows of the Jacobian. These PCs manifest themselves as smooth perturbations in the sensitivity profile despite the low number of samples (Jacobian matrices for different neonates) compared to the number of unknowns (voxels). The complexity of the patterns increases with decreasing variance. 

A strength of PCA is that a low number of PCs is often sufficient for explaining the majority of the variation in the data. We limited to using the first ten PCs for each measurement, since this was evaluated as a reasonable compromise between the representation power of the PCs and their reliability considering the low sample size. Moreover, we observed that our algorithms mainly utilize only the first couple of PCs, so employing only ten PCs per row decreases the computational load without clear effects on the reconstruction quality.

To measure the ability of the first ten row-wise PCs to represent the rows of the Jacobians, we estimated the relative error in the Euclidean norm as $\| \hat{a} - a \| / \| a \|$, where $\hat{a}$ is the optimal 10-PC representation for a row $a$ in a true Jacobian matrix. Fig.~\ref{fig:pc_reco_relat_error} visualizes the average errors over all neonates and all measurements for the log-amplitude~(a) and phase Jacobians (b) with  voxel side length of 2\,mm. The squares represent the mean and the vertical lines the standard deviation of the error for the measurements/rows for which the SDS is within the interval indicated on the horizontal axis. The relative errors are smaller for the log-amplitude than phase Jacobians, and they increase as a function of the SDS in both cases. With log-amplitude and short-channel measurements, the most sensitive voxels are confined to a smaller volume closer to the surface compared to phase and long-channel measurements, which may explain the smaller errors. In 1-mm resolution, the relative errors are somewhat higher but the general trends are the same. The sufficiency of the PC representation is ultimately evaluated via the performance of the reconstruction algorithms in the numerical experiments. 

% 5.1.2026 AH
\begin{figure}[t!]
    \centering
    \includegraphics[width=\linewidth]{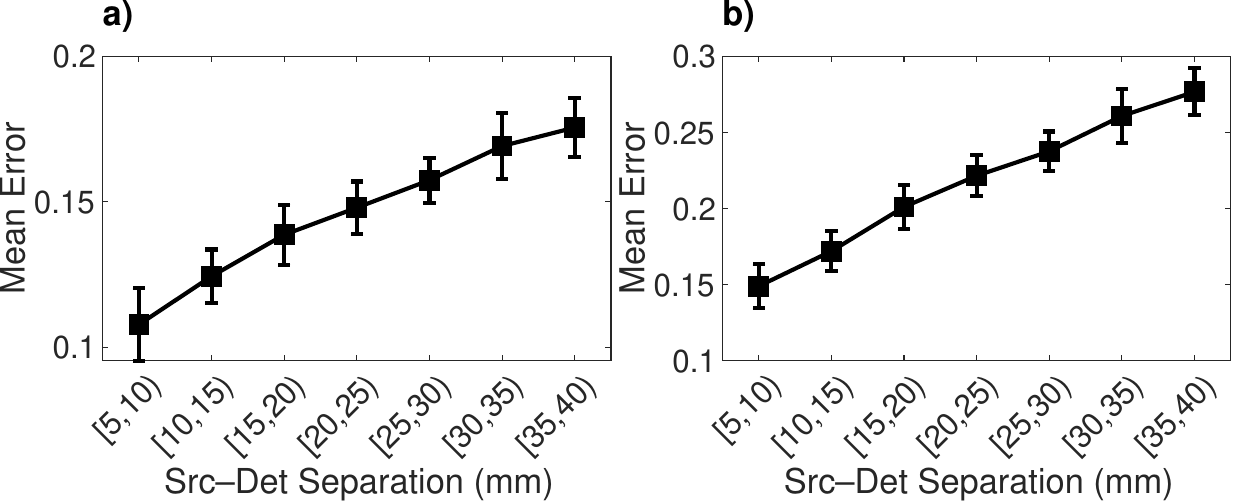}
    \caption{Statistics for the relative error of the PC representation for the rows of the Jacobians in the 2-mm resolution with the first 10 row-wise PCs. (a)~log-amplitude. (b)~phase. The squares depict the mean relative errors and the vertical lines their standard deviations for rows in the SDSs ranges indicated on the horizontal axis.}
    \label{fig:pc_reco_relat_error}
\end{figure}

\subsection{Numerical Experiments}
We consider four multi-perturbation patterns and five neonates that are circled in Fig.\ \ref{fig:registration}. The results are presented for the five settings described in Table \ref{table:cases} and referred to as Cases 1--5. We selected three full term, one late preterm and one very preterm neonate with differing baseline optical parameters (see Table \ref{table:param}) based on the combined registration accuracy and CNR improvement. The original head circumferences were 26.5--35.9\,cm, before all neonates were registered to the reference head model with circumference of 33.9\,cm. The median and minimum of mean registration errors in Table \ref{table:cases} indicate the overall goodness of the atlas and the closest reference for each target neonate, respectively. As described earlier, the target's Jacobian is always excluded from the PC basis and thus not directly exploited in the reconstruction algorithms. However, we assume that the head shape of the target is known exactly when limiting the FOV.

Table \ref{table:cases} lists the locations of the perturbations using the labels in Fig.\ \ref{fig:probe_pert}. In Case~1, the perturbation map contains two perturbations in the brain, in Case 2 there are four, and the rest contain ten perturbations in the brain. In Cases 4--5, there are also five perturbations on the head surface. As described in Section \ref{sec:perturbation_patterns}, the absorption increase in the scalp perturbations is 6\,m$^{-1}$ and in the brain 8\,m$^{-1}$, except for the cortical perturbations at locations 3 and 7 in Fig.\ \ref{fig:probe_pert}, where the increase is 6\,m$^{-1}$.

As described at the end of Section \ref{sec:perturbation_patterns}, we employed two spatial resolutions for the reconstructions and two approaches to generate the difference data for each perturbed model. In Table \ref{table:cases}, if ``Resolution'' is 1\,mm, we used the actual simulated difference data and the absorption change was reconstructed in 1-mm resolution. If ``Resolution'' is 2\,mm, the data was generated directly with the linear model using the true Jacobian of the target neonate in 1-mm resolution, and the absorption change was reconstructed in 2-mm resolution. If ``Resolution'' is ``Both'', we used actual simulated difference data and computed the reconstructions in both resolutions. We tested all Algorithms~\ref{alg:alternating}, \ref{alg:GN} and \ref{alg:Gibbs} (G--N, Gibbs and BCD) for Case 5. Only the G--N algorithm was applied to Cases 1--4.

\begin{table}[t!]
\centering
\caption{The five considered combinations (Cases 1--5) of target neonate and absorption perturbation pattern from Section \ref{sec:perturbation_patterns}. The combined gestational and chronological age (gwk) and the original head circumference is provided for each neonate. Registration error is given as the median/minimum of the mean registration errors over all 214 registered neonates. The row-wise Jacobian PCA and reconstruction are computed either in 1- or 2-mm resolution.}  
\begin{tabular}{ m{0.8cm} m{2cm} m{1.1cm} m{1.8cm} m{1.8cm} m{0.9cm} m{1.6cm} }
 \toprule
 Case & Neonate & Regis-\newline tration Errors [mm] & Scalp--Skull Perturbations\newline (Fig.\ \ref{fig:probe_pert}) & Brain Per-\newline turbations (Fig.\ \ref{fig:probe_pert}) & Reso-\newline lution [mm] & Result\newline Algorithm \\ 
 \midrule
  1 & 39.0\,gwk/ & 1.39/ & - & 7 & 1 & Figs.\ \ref{fig:collection_1mm}a--c \\
    & 34.2\,cm & 0.71 & & & & G--N \\ % N98, P21
  2 & 35.4\,gwk/ & 1.17/ & - & 1, 3 & 1 & Figs.\ \ref{fig:collection_1mm}d--f \\
    & 32.2\,cm & 0.46 & & & & G--N \\ % N46, P50
  3 & 40.9\,gwk/ & 1.25/ & - & 1, 2, 3, 5, 7 & 2 & Figs.\ \ref{fig:linear_2mm_collection}a--c \\
    & 35.4\,cm & 0.56 & & & & G--N \\ % N158, P49
  4 & 29.9\,gwk/ & 1.32/ & 1, 2, 3, 4, 7 & 1, 2, 3, 4, 7 & 2 & Figs.\ \ref{fig:linear_2mm_collection}d--f \\
    & 26.5\,cm & 0.56 & & & & G--N \\ % N4, P56
  5 & 41.9\,gwk/ & 1.36/ & 1, 2, 3, 4, 7 & 1, 2, 3, 4, 7 & Both & Fig.\ \ref{fig:comparison_2mm} \\	 
    & 35.9\,cm & 0.58 & & & & All \\ % N188, P56
 \bottomrule	
\end{tabular}
\label{table:cases}
\end{table}

\subsubsection{Gauss--Newton Algorithm}
The results produced by the G--N algorithm (Algorithm~\ref{alg:GN}) for Cases 3--4 are presented in Fig.\ \ref{fig:linear_2mm_collection}, and for Cases 1--2 in Fig.~\ref{fig:collection_1mm}. The G--N algorithm was run for 100 rounds with $\kappa = 0.2$, which was evaluated to definitely suffice for convergence in all cases. In Figs.~\ref{fig:linear_2mm_collection} and \ref{fig:collection_1mm}, the black line marks the borders of the target absorption perturbations, and each row corresponds to one case. Panels (a, d) show the reconstructions obtained with the true Jacobian of the considered neonate; see Remark~\ref{remark:fixed}. The reconstructions obtained with the mean Jacobian are presented in (b, e), and the reconstructions by the G--N algorithm in (c, f). Fig.\ \ref{fig:linear_2mm_collection} visualizes mean reconstructions over axial slices covering the perturbed regions, since the perturbations spread over a large volume. Fig.\ \ref{fig:collection_1mm} considers a single axial slice at the mid-height of the perturbed region, since there are only one or two nearby brain perturbations approximately at the same height. The colormap is constructed separately for each reconstruction. The visualization threshold is set to 1\% of the maximum absolute value in each image. For clarity, the background image only separates the combined scalp and skull layer, the subarachnoid and the brain (combined GM, WM and sulci). 

As expected, the true Jacobian of the target neonate yields the best reconstructions in all cases. The regions with maximum contrast follow the target perturbation boundaries relatively well without spreading the activity towards the exterior surface. The reconstructed contrast is approximately 18\% lower than the target, which cannot be considered atypical for DOT. All in all, the quality of the reconstructions supports the suitability of the absorption perturbation prior introduced in Section \ref{sec:abs_prior}.

The accuracy metrics, i.e., the CNR from~\eqref{eq:CNR_def} and the root-mean-squared-error~E, are presented in the titles of the reconstruction images. In each case, the G--N algorithm manages to increase the CNR and decrease the error E when compared to the reconstruction obtained with the mean Jacobian. However, the metrics are not as good as for the true Jacobian. 

The improvement compared to the mean Jacobian can also be verified visually. The G--N algorithm locates the absorption perturbations more accurately, although their shapes are blurred and not entirely confined within the black lines. Thus, the algorithm seems to perform better in focusing to the correct region of interest than separating nearby perturbations or reconstructing their exact shape. The background is also smoother, with less spatial undulation. However, in Fig.~\ref{fig:linear_2mm_collection}f, the G--N algorithm fails to reconstruct the absorption increase on the head surface, and places a smooth negative region on the surface instead. Such a negative layer is a recurring phenomenon in G--N reconstructions, though it is less apparent in the 1-mm cases considered in Fig.~\ref{fig:collection_1mm}. The G--N algorithm produces the lowest and the mean Jacobian the highest contrasts for the reconstructed perturbations, but the latter comes with the price of severe oscillations in the background, as also indicated by the CNR and~E.

The G--N algorithm did not find the correct $y$-coefficients, i.e., the ones giving the best representation for the true Jacobian in the PC basis, in any of the presented cases (the same also applies to BCD and the Gibbs sampler below). At best, the coefficients of the first PCs had correct signs and roughly correct magnitudes for some optode pairs close to the perturbed regions. This happened especially if we had only one superficial perturbation, but these cases have less practical relevance and are not considered further. Nevertheless, it seems that rather than enabling the reconstruction of the target Jacobian, incorporating PCs of the Jacobian rows in the algorithm provides extra flexibility that allows to reduce the most severe reconstruction artifacts caused by the mismodeling of the target anatomy.

% IN THIS FIGURE (23.2.2026):
% – 1st row: 158, perturbation pattern 49
% – 2nd row: N4, perturbation pattern 56
\begin{figure}[t!]
    \centering
    \includegraphics[width=0.9\linewidth]{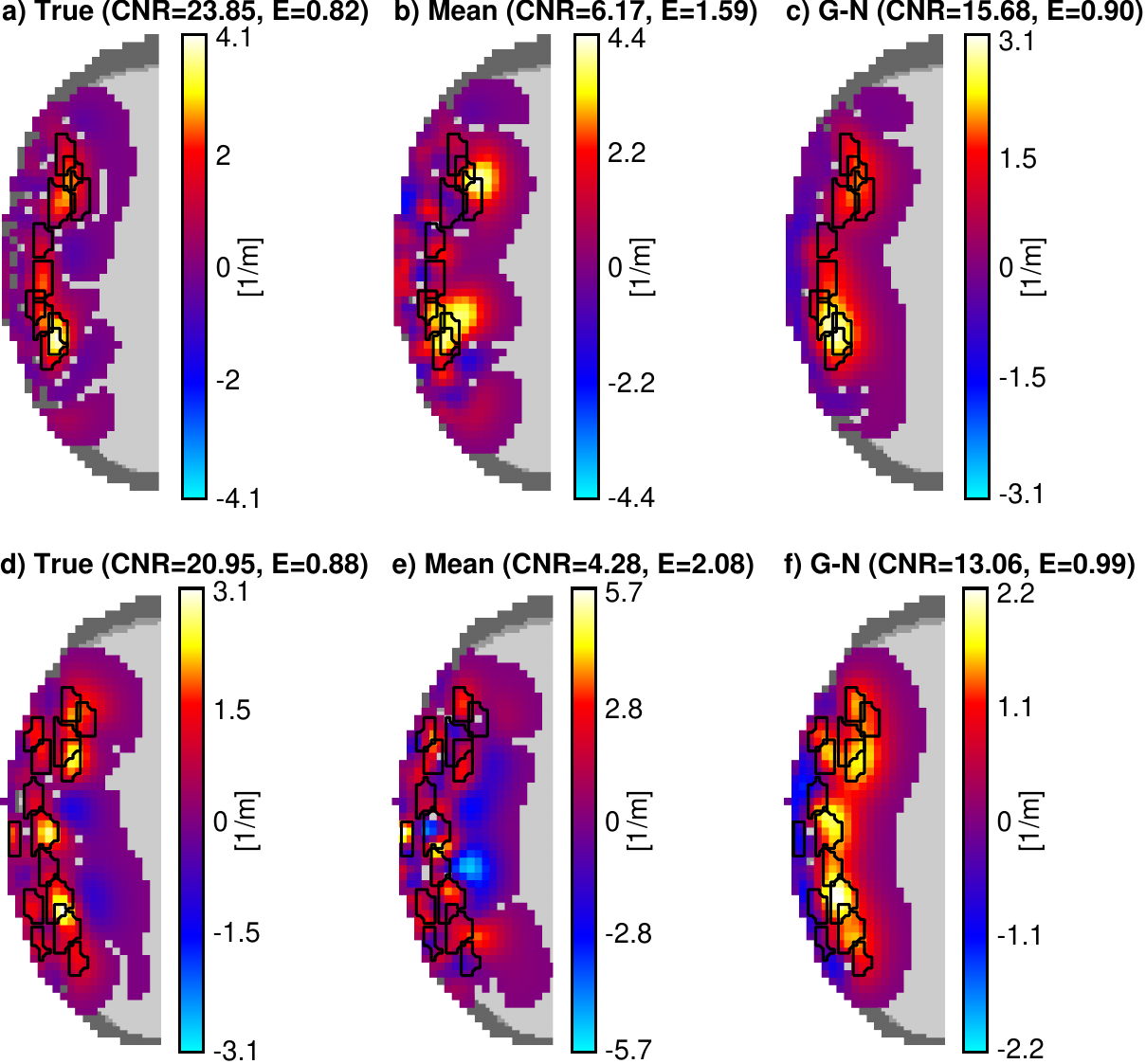}
    \caption{First row: Case~3. Second row: Case~4. Reconstructions averaged over axial slices covering the perturbed region, computed with the true Jacobian of the target neonate (a,~d), the mean Jacobian over the other 214 neonates registered to the target head shape (b,~e), and by improving the rows of the mean Jacobian along their first 10 PCs with the G--N algorithm (c,~f). The black lines mark the target perturbations with contrast of 6\,m$^{-1}$ or 8\,m$^{-1}$ (30.6\,$\mu$M or 40.8\,$\mu$M at 798\,nm, respectively).}
    \label{fig:linear_2mm_collection}
\end{figure}

% IN THIS FIGURE (23.2.2026):
% – 1st row: N98, perturbation pattern 21 
% – 2nd row: N46, perturbation pattern 50
\begin{figure}[t!]
    \centering
    \includegraphics[width=\textwidth]{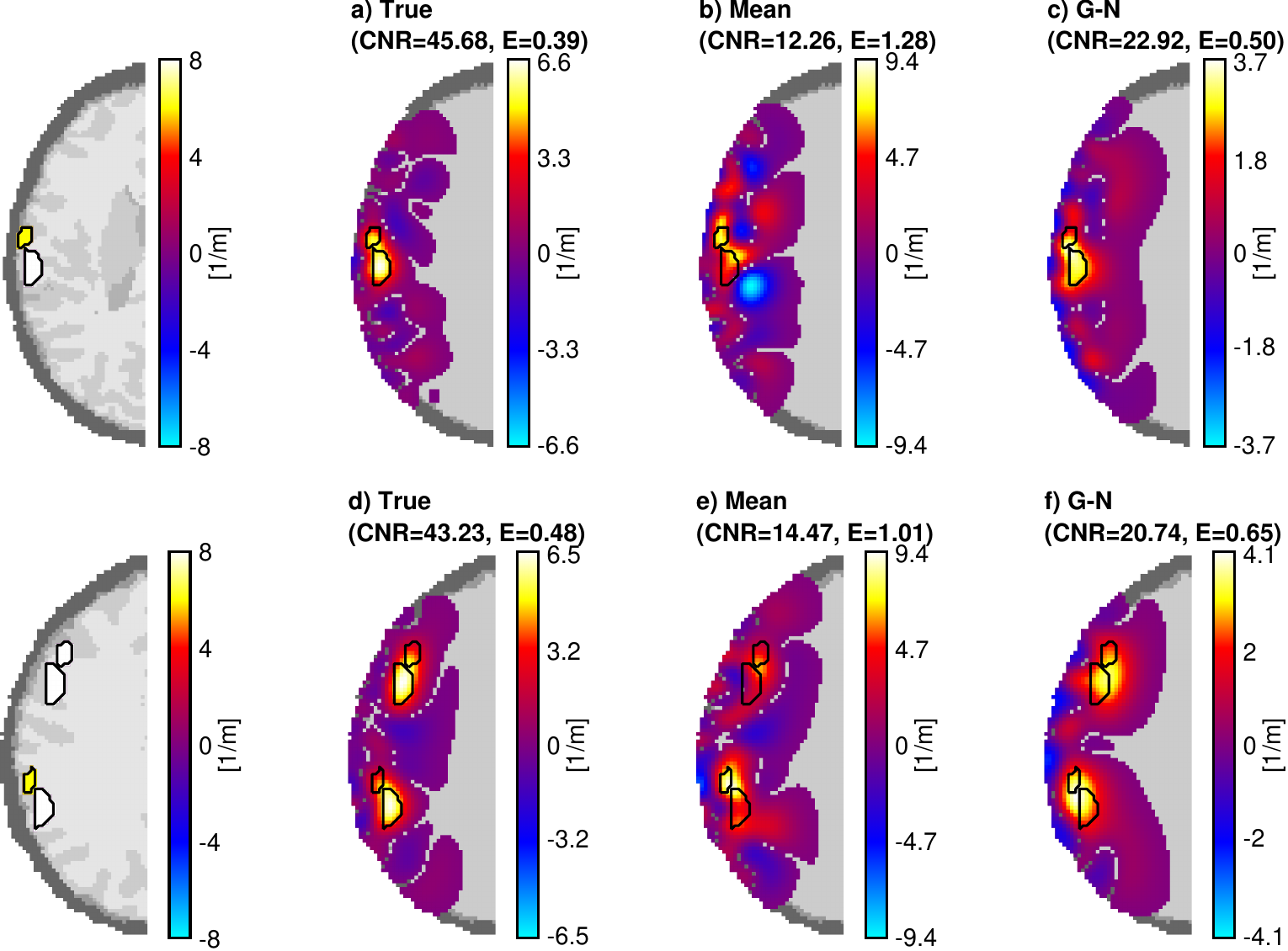}
    \caption{First row: Case~1. Second row: Case~2. Three-dimensional reconstructions restricted onto a single axial slice at the mid-height of the perturbed regions, computed with the true Jacobian of the target neonate (a,~d), the mean Jacobian over the other 214 registered neonates (b,~e), and by improving the rows of the mean Jacobian along their first 10 PCs with the G--N algorithm (c,~f). The black lines mark the target perturbations with contrast of 6\,m$^{-1}$ or 8\,m$^{-1}$ (30.6\,$\mu$M or 40.8\,$\mu$M at 798\,nm, respectively). The target absorption perturbation is presented in the leftmost column with separated tissue types visualized in the background. 
    }
    \label{fig:collection_1mm}
\end{figure}

To further verify the performance of the G--N algorithm, we considered each of the 215 neonates consecutively as the target and computed the results for all four perturbation maps described in Table~\ref{table:cases}. The difference data was simulated in the 1-mm resolution, and the PCA and reconstructions were formed in the 2-mm resolution. Fig.\ \ref{fig:linear_2mm_cnr_collection} presents the CNR values for the reconstructions produced by the true Jacobian (black circles), the atlas mean Jacobian (red crosses), and the G--N algorithm (blue dots). The horizontal axis corresponds to the target neonates sorted according to the CNR of the G--N reconstruction. Figs.\ \ref{fig:linear_2mm_cnr_collection}a, b, c and d correspond to the perturbation patterns in Cases 1, 2, 3 and 4 \& 5, respectively. On average, the G--N algorithm manages to increase the CNR, but not up to the levels obtained with the true Jacobian. The algorithm also appears to work better when there are more perturbations at different locations and/or perturbations also on the head surface: in Figs.\ \ref{fig:linear_2mm_cnr_collection}a, b, c and d, the total number of perturbed regions are 2, 4, 10 and 15, respectively, and the relative performance of the algorithm clearly improves as a function of this number. In some tests, the mean Jacobian managed to separate neighboring perturbations close to the level of the true Jacobian, whereas Gauss--Newton blurred nearby perturbations into one with reduced (overall) contrast, which presumably explains some of the cases in Fig.\ \ref{fig:linear_2mm_cnr_collection}, where Gauss--Newton lowers the CNR compared to the mean Jacobian.
 
% IN THIS FIGURE: CNR PLOTS
\begin{figure}[t!]
    \centering
    \includegraphics[width=0.9\linewidth]{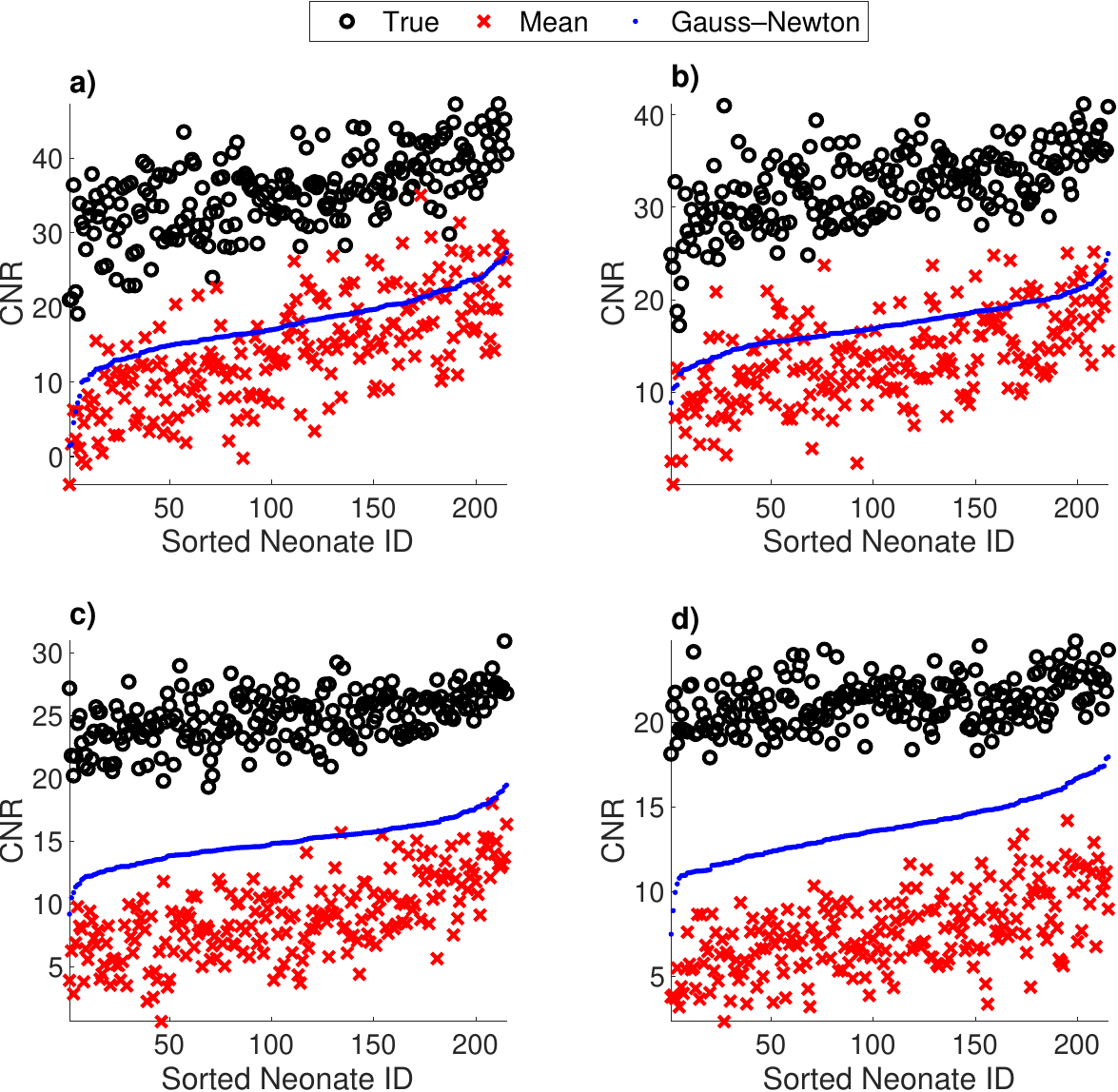}
    \caption{CNRs for reconstructions for (a) Case 1, (b) Case 2, (c) Case 3 and (d) Case 4 \& 5 in Table \ref{table:cases} with each of the 215 neonates consecutively as the registration target. The black circles correspond to the true Jacobian of the target, the red crosses to the mean Jacobian over the atlas, and the blue dots to the G--N algorithm. In each case, the target Jacobian was excluded from the computation of the mean Jacobian and the PCs used by the G--N. The neonates are sorted  by the CNRs for the G--N.}
    \label{fig:linear_2mm_cnr_collection}
\end{figure}

\subsubsection{Comparison of Different Algorithms}
Finally, we also test the other two algorithms, namely, BCD (Algorithm~\ref{alg:alternating}) and the Gibbs sampler (Algorithm~\ref{alg:Gibbs}) using Case 5 from Table \ref{table:cases}. The difference data for the model with 15 perturbations was simulated in the 1-mm resolution. However, we ran BCD and the Gibbs sampler in the 2-mm resolution since both of them required a high number of iterations for convergence compared to the G--N algorithm. 

The reconstructions are presented in Fig.\ \ref{fig:comparison_2mm} for one axial slice close to the mid-height of the imaged region shown in Fig.\ \ref{fig:probe_pert}. The reconstruction obtained with the true Jacobian of the target is depicted in panel (a), the one obtained with the atlas mean Jacobian in (b), and the one obtained with the G--N algorithm in (c), all in the 1-mm resolution. The reconstruction by the G--N algorithm in the 2-mm resolution is presented in (d), and the reconstruction after 7\,000 iterations of BCD in (f). Panel (e) shows the mean of the first 100\,000 samples of the Gibbs sampler. The accuracy metrics, i.e., the CNR and the error E, are again presented in the title of each subfigure. Although we only provide one axial slice to support the following observations, other reconstruction slices through the perturbed region exhibited similar characteristics, with the selected slice not presenting the ``best-case scenario'' for our algorithms.

All three algorithms managed to visually improve the reconstruction produced by the mean Jacobian in Fig.\ \ref{fig:comparison_2mm}b, especially around the two top-most and deepest brain perturbations. The mean Jacobian also yields the lowest CNR and the highest reconstruction error E, which all three algorithms manage to improve, though not to the level of the true Jacobian. As in Figs.~\ref{fig:linear_2mm_collection} and \ref{fig:collection_1mm}, the reconstruction with the mean Jacobian clearly exhibits the largest dynamic range, but it also provides by far the worst localization of the perturbed regions. On the contrary, Algorithms~\ref{alg:alternating}, \ref{alg:GN} and \ref{alg:Gibbs} reconstruct the target perturbations with a 30--50\% lower contrast than the true Jacobian, but they also significantly reduce the oscillations in the background compared to the mean Jacobian.

We do not observe clear differences in the performance of the Gauss--Newton algorithm between the 1-mm and 2-mm resolution, though the PC explanation ratios are higher in the latter case. The 2-mm reconstructions look similar independent of whether actual simulated difference data (Fig.~\ref{fig:comparison_2mm}) or data generated directly by the linear model (Fig.~\ref{fig:linear_2mm_collection}) were used, which supports the sufficiency of the linear approximation.

The G--N algorithm (both resolutions) and BCD produced qualitatively similar reconstructions, even though they correspond to different local minima of the Tikhonov functional. In particular, both algorithms create a negative layer by the exterior surface, but unlike in Fig.\ \ref{fig:linear_2mm_collection}f, the superficial perturbation is reconstructed with the correct sign in Fig.\ \ref{fig:comparison_2mm}. Both G--N and the BCD locate all perturbations relatively well, although they spread outside the target boundaries towards the surface. Interestingly, both algorithms manage to reconstruct some perturbations deeper in the brain, which are not as visible in the reconstructions by the true Jacobian or the Gibbs sampler. However, this may partially be related to the selected colormap and the fact that the deeper perturbations are reconstructed with a lower contrast by the true Jacobian.

The reconstruction by the Gibbs sampler is the closest match with the reconstruction obtained using the true Jacobian. On the visualized axial slice, the Gibbs sampler highlights the same two perturbations close to the mid-height of the slice, and the contrast is also closest to the reference provided by the true Jacobian. Of the three algorithms, the Gibbs sampler also produces the highest CNR and the lowest reconstruction error E. The better performance of the Gibbs sampler may be explained by its ability to survey the whole joint posterior distribution of $x$ and $y$ without focusing its attention on a single local minimum -- on the negative side, the computational burden of the Gibbs sampler is significantly higher than that of the G--N algorithm.

% IN THIS FIGURE (23.2.2026)
%   – N188, perturbation pattern 56
%   - Gibbs: all 100 000 samples
%   – Alternating: 7000 iterations
\begin{figure}[t!]
    \centering
    \includegraphics[width=0.9\linewidth]{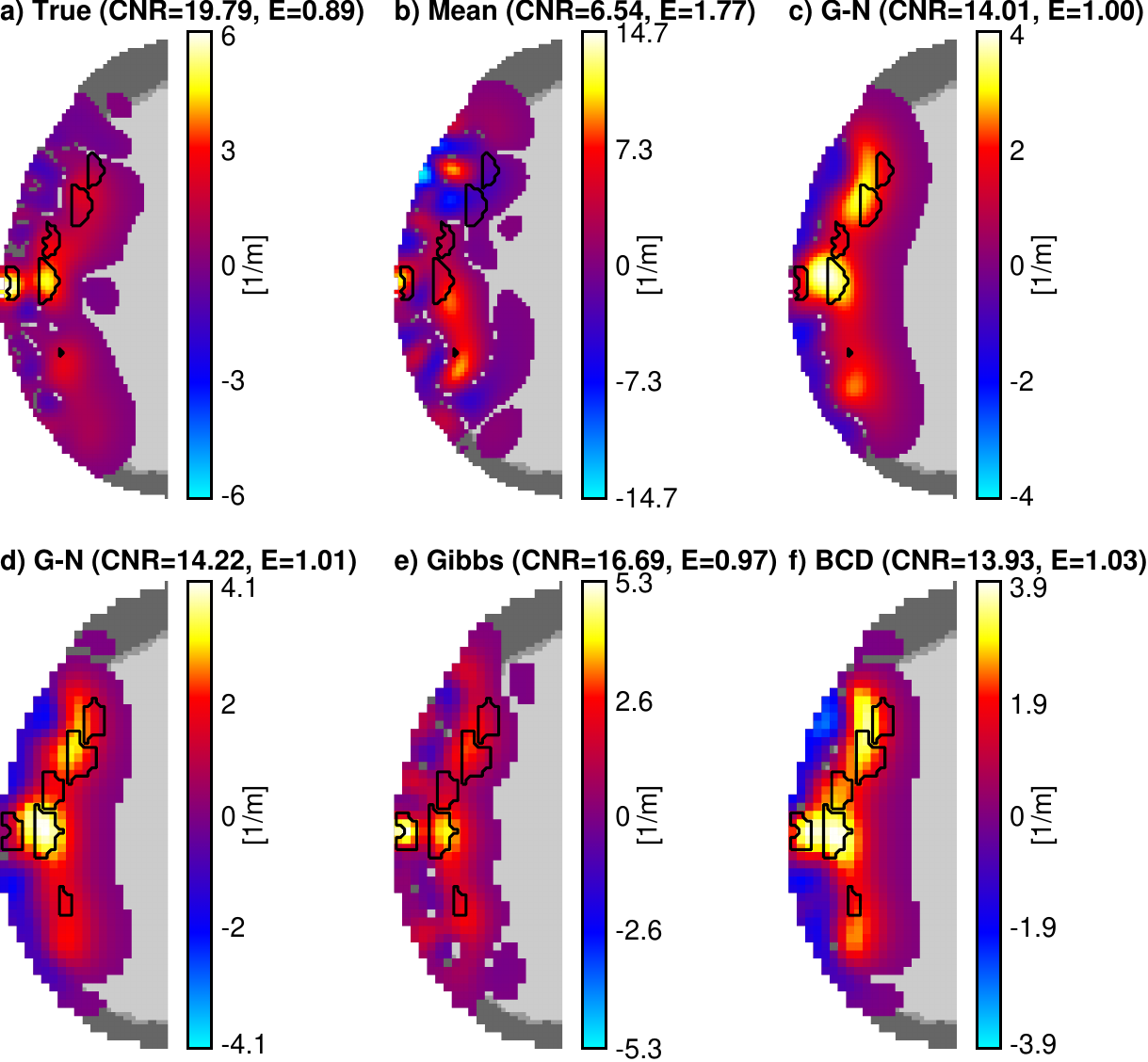}
    \caption{Case 5. First row: reconstructions computed in the 1-mm resolution with the true Jacobian of the neonate (a), the mean Jacobian over the other 214 neonates registered to the target head shape (b), and by improving the rows of the mean Jacobian along their first 10 PCs with the G--N algorithm (c). Second row: reconstructions computed in the 2-mm resolution by the G--N algorithm (d), by taking the mean of the first 100\,000 samples by the Gibbs sampler (e), and by taking 7000 iterations of BCD (f). The black lines mark the target perturbations with contrast of 6\,m$^{-1}$ or 8\,m$^{-1}$ (30.6\,$\mu$M or 40.8\,$\mu$M at 798\,nm, respectively).}
    \label{fig:comparison_2mm}
\end{figure}

\subsubsection{Computational Resources and Requirements}
The Triton Cluster supported by the Aalto Science-IT project provides multiple central processing units (CPUs) and graphics processing units (GPUs), as well as memory up to 120\, GB. The cluster was required for the heaviest computational parts, i.e., simulating the Jacobians on GPUs and performing PCA over the Jacobians, and we also used it for parallelizing lighter jobs. Computing a single G--N reconstruction took about 35\,s in 1-mm resolution and 0.7\,s in 2-mm resolution on a non-optimized MATLAB implementation and a laptop with 64 GB RAM and an Apple M1 Max chip. The computation times for BCD were considerably higher due to the need for almost 100 times as many iterations for convergence. The sample size for the Gibbs sampler has to be relatively large to obtain a reliable estimate, which also required a considerable computational effort, i.e., orders of magnitude higher than for G--N.

\subsection{Limitations}
\label{sec:remarks}
We observed that the G--N and BCD algorithms converge to local minima. The extra flexibility provided by the PCs seems to enable handling some inaccuracy in the forward model, but the way in which these algorithms utilize them remains a topic for further study. Possible future approaches for the introduced bilinear tensor formulation include the expectation--maximization algorithm, utilizing the marginalized posterior in Proposition \ref{prop:marg_post}, and the tensorial-lifting approach in \cite{Beinert19,Beinert20}. Depending on the signal analysis in practical DOT, the problem might also need to be re-formulated for a time series of difference data, for example, by using previous time steps as initial guesses for the later ones to encourage similarity in the constructed Jacobians over time. 

The terraced voxel boundaries do not model the smoothness and curvature of the head surface realistically~\cite{yan2020}, and we were concerned that the PCA could focus on the boundary variation. Thus, we repeated all analysis with a semi-hybrid model combining the voxel inner contents with a surface mesh in the Split-Voxel Monte Carlo (SVMC) software~\cite{yan2020}. The obtained results were very similar, but are not presented explicitly due to limited space.

The noise model does not consider physiological noise or covariance between the channels, and 30 successfully repeated measurements is a high requirement in practice, especially with multiple stimulus types. Temporally correlated background physiology affects, for example, how repeated measurements improve the CNR of the difference data and may cause unexpected artifacts to reconstructions. If we increase the noise levels, the reconstruction with the true Jacobian becomes less accurate, which makes it more difficult to distinguish the artifacts related to the inaccurate forward model and the improvements by the algorithms. The effectively relaxed regularization and increased flexibility can also start to account for the modeling errors. Yet in practice, modeling errors may be partly indistinguishable under real measurement noise, which can be a challenge.

We wish to emphasize that the mean Jacobian performs here poorer than what is expected from more age-specific atlas-based anatomical models in general, since we have included a wide range of gestational ages with varying success in the linear registration and the FOV is cut by the target head. Thus, the presented reconstructions by the mean Jacobian  do not correspond to the expected performance of atlas-based models. At the same time, we made the task more challenging for our algorithms, since the initial guess is further away from a local minimum and the PCs carry more variation. Ideally, one would have models for thousands of newborns with a maximum age difference of one week and with minimal registration error~\cite{CollinsJones2021}.

If the original volume of the head is clearly different from the reference, linear registration is not justified, for example, since the subarachnoid layer is thin in all age groups and does not increase linearly with volume. As a result, when the (extremely) preterm neonates are registered to the full term reference, the anatomies become unrealistic to some extent. However, we did not exclude any age group at this point, since even the current number of registered models (214) was too small for the number of voxels in the FOV. 

In the five considered cases, the median of the registration errors are 1.17--1.4\,mm and the closest registered model is 0.46--0.71\,mm apart on average. However, the clear outlier heads ``break'' the mean Jacobian and cause clear errors in the corresponding reconstruction. This also effectively exaggerates the variation in the scalp--brain distances, since the registered surface may be outside or inside the target surface. The mean Jacobian provides a (non-optimal) reference for testing the introduced algorithms, but it does not present how average anatomical models should be created.

To conclude, we suggested a new way to utilize an atlas of head models, but we do not claim that our approach is generally more optimal. If the atlas contains a model that is a good approximation of the target anatomy, then using that individual's Jacobians could produce better images, especially contrast-wise, than our algorithms. But once the PC basis has been constructed, the G--N algorithm can be employed to form a candidate Jacobian and a reconstruction image quickly (in seconds on a laptop) without the need to search for the closest match. On the other hand, the offline computational load is relatively high, but the process is automatic, whereas exact surface registration for a more representative average anatomy might require manual work.

\section{Conclusions}
\label{sec:conclusion}
We studied a bilinear finite-dimensional inverse problem with application to DOT. Our aim was to tackle the inaccuracy of the forward operator, a consequence of unknown head anatomy of the subject, while simultaneously solving the image reconstruction problem. The suggested technique was to use Bayesian inversion framework, and a set of suitable basis matrices.

DOT can be used to image hemodynamic changes on the brain cortex. It provides a functional neuroimaging method that neonates typically tolerate well, since the newborn can be held by their parent or a trained nurse for comfort. The relative comfortability of the measurements to the subject is further supported by utilizing an atlas of head models instead of acquiring the subject's own anatomical model when computing the Jacobian matrices for image reconstruction. In this work, we utilized a database of 215 models provided by UCL. We simulated the Jacobians in each of the models with the MCX software after linearly registering all models to one selected reference head shape. We performed PCA in the formed Jacobian atlas to construct the basis matrices. The bilinear tensor form was used to include the basis coefficients as secondary unknowns in the image reconstruction. We introduced an automatic logic that locally perturbs the total hemoglobin concentration for observing the performance of the algorithms. 

We implemented three different approaches for improving the atlas mean Jacobian using ten first PCs for each measurement: the G--N algorithm, BCD, and a two-block Gibbs sampler. Application of Gibbs sampling in image reconstruction and use of multi-perturbation patterns with ten or more perturbed regions are uncommon approaches in DOT literature. In the majority of the cases, all algorithms improved the initial reconstruction both visually and numerically by increasing the CNR and lowering the RMSE, but the coefficients of the basis matrices could not be restored systematically, presumably due to local minima and the general difficulty of the ill-posed inverse problem. Discovering the global optimum is an interesting topic for further research and could be approached by utilizing the marginalized posterior.

\section*{Acknowledgements}
We acknowledge the computational resources provided by the Aalto Science-IT project. Dr.\ Ilkka Nissil{\"a} provided the formulas for amplitude and phase noise, which is highly appreciated along with the many helpful discussions. We thank Dr.\ Antti Hannukainen for guidance with the Kabsch algorithm for registration. We utilized the head model data made available from the Developing Human Connectome Project funded by the European Research Council under the European Union's Seventh Framework Programme (FP/2007-2013)/ERC Grant Agreement no.\ (319456). We thank Dr.\ Liam Collins-Jones for help with accessing the database. 

\appendix
\section{Proof of Proposition~\ref{prop:marg_post}}
\label{sec:marg_post}

Consider the posterior for $(y,x)$ from \eqref{eq:xy_posterior}. A direct calculation yields
\begin{align}
\label{eq:G_prod}
        \pi (y,x \mid b) &\propto \exp \!{\Big( - \dfrac{1}{2} \big( \lVert b - A_0x - \Acal \bigcdot (y,x)_{2,3}  \rVert^{2}_{\Gamma_1^{-1}} +\lVert y \rVert^{2}_{\Gamma_2^{-1}} + \lVert x \rVert^{2}_{\Gamma_3^{-1}}   \big) \Big)} \nonumber \\[1mm]
        &= \exp\!{\Big( - \dfrac{1}{2}  ( y - \hat{y})^{\top} \Gamma_{y | b,x}^{-1} ( y - \hat{y}) \Big)} \exp\!{\Big( - \dfrac{1}{2}  \big( x^{\top} \Gamma_3^{-1} x + \xi^{\top} \Gamma_1^{-1} \xi - c \big) \Big)} \nonumber \\[1mm]
        &=: f(y,x) \, g(b,x), 
\end{align}
where
\begin{align}
\Gamma_{y | b,x} &= \big(\Gamma_2^{-1} + A_{x,3}^{\top} \Gamma_1^{-1} A_{x,3} \big)^{-1}, \nonumber \\[1mm]
\xi &= b - A_0x, \label{eq:xi}\\[1mm]
\hat{y} &= (\Gamma_2^{-1} + A_{x,3}^{\top}\Gamma_1^{-1}A_{x,3})^{-1} A_{x,3}^{\top} \Gamma_1^{-1} \xi, \nonumber \\[1mm]
c &= \xi^{\top} \Gamma_1^{-1} A_{x,3} \Gamma_{y | b,x} A_{x,3}^{\top} \Gamma_1^{-1} \xi \label{eq:c}
\end{align}
are independent of $y$. Note that $\Gamma_{y | b,x}$ is the covariance matrix of $Y|b,x$,~i.e.,~of $Y$ conditioned on $B = b$ and $X = x$, by virtue of the linear relationship $b = A_0 x  + A_{x,3} Y + E$ and \cite[Theorem~3.7 \& Eq.~(3.16)]{Kaipio2006}.

The first term $f(y,x)$ on the right-hand side of \eqref{eq:G_prod} is almost Gaussian in $y$, i.e., Gaussian modulo an $x$-dependent multiplicative constant, which enables marginalizing the posterior with respect to $y$ in a closed form:
\begin{align} 
\label{eq:marg_post0}
    \pi( x \mid  b ) &=  \int_{\mathbb{R}^{p}} \pi( y, x \mid  b) \, {\rm d}y 
    \, \propto g(b, x) \int_{\mathbb{R}^{p}} f(y,x) \, {\rm d}y \propto | \Gamma_{y | b, x} |^{\frac{1}{2}} \, g(b, x).
\end{align}
Moreover, inserting \eqref{eq:c} in the definition of $g(b,x)$ gives
\begin{align*}
   - 2 \log g( b,x ) 
    &=  x^{\top}\Gamma_3^{-1}x  + \xi^{\top} \big( \Gamma_1^{-1} - \Gamma_1^{-1}A_{x,3}\big(\Gamma_2^{-1} + A_{x,3}^{\top} \Gamma_1^{-1} A_{x,3}\big)^{-1}A_{x,3}^{\top}\Gamma_1^{-1} \big) \xi   \\[1mm]
    &= x^{\top}\Gamma_3^{-1}x + (b - A_0 x)^{\top} \big( \Gamma_1 + A_{x,3} \Gamma_2 A_{x,3}^{\top} \big)^{-1}(b - A_0 x)  ,
\end{align*}
where the second step follows from the Woodbury matrix identity and \eqref{eq:xi}. Note that $\Gamma_{b | x} = \Gamma_1 + A_{x,3} \Gamma_2 A_{x,3}^{\top}$ is the covariance matrix of $B|x$ due to the linear relation $B = A_0 x + A_{x,3} Y + E$ for $X = x$ and the assumed mutual independence of $Y$ and $E$. In particular, $g( b,x )$ coincides with the exponential term on the right-hand-side of \eqref{eq:marg_post}. Combined with \eqref{eq:marg_post0}, this proves the claim.

% Bibliography using bibtex %%%%%%%%%%%%%%%%%%%
\bibliographystyle{siamplain}
\bibliography{biblio.bib}

\end{document}